\numberwithin{equation}{section}
\newtheorem{theorem}{Theorem}[section]
\newtheorem{proposition}[theorem]{Proposition}
\newtheorem{lemma}[theorem]{Lemma}
\newtheorem{corollary}[theorem]{Corollary}
\theoremstyle{definition}
\newtheorem{definition}[theorem]{Definition}
\newtheorem{remark}[theorem]{Remark}
\begin{document}

\baselineskip=15pt

\title[Torsors on moduli spaces of parabolic bundles]{On some natural torsors over moduli 
spaces of parabolic bundles}

\author[I. Biswas]{Indranil Biswas}

\address{School of Mathematics, Tata Institute of Fundamental
Research, Homi Bhabha Road, Mumbai 400005, India}

\email{indranil@math.tifr.res.in}

\subjclass[2010]{14H60, 14D21}

\keywords{Parabolic bundle, moduli space, theta bundle, connections, Quillen connection}

\date{}

\begin{abstract}
A moduli space ${\mathcal N}$ of stable parabolic vector bundles, of rank $r$
and parabolic degree zero, on a $n$-pointed curve has two naturally occurring
holomorphic $T^*{\mathcal N}$--torsors over it. One of them is given by the moduli space
of pairs of the form $(E_*,\, D)$, where $E_*\, \in\, {\mathcal N}$ and $D$ is a
connection on $E_*$. This $T^*{\mathcal N}$--torsor has a $C^\infty$ section that
sends any $E_*\, \in\, {\mathcal N}$ to the unique connection on it with unitary
monodromy. The other $T^*{\mathcal N}$--torsor is given by the sheaf of
holomorphic connections on a theta line bundle over ${\mathcal N}$.
This $T^*{\mathcal N}$--torsor also has a $C^\infty$ section given by the Hermitian
structure on the theta bundle constructed by Quillen. We prove that
these two $T^*{\mathcal N}$--torsors are isomorphic by a canonical holomorphic map.
This holomorphic isomorphism interchanges the above two $C^\infty$ sections.
\end{abstract}

\maketitle

\tableofcontents

\section{Introduction}

Let $X$ be a compact connected Riemann surface and $S\, \subset\, X$ a finite subset.
Fix parabolic data, for a rank $r$ vector bundle, for points of $S$ such that
the total parabolic weight for entire $S$ is an integer. Let ${\mathcal N}_P(r)$ denote the
moduli space of stable parabolic bundles of rank $r$ and parabolic degree zero with
parabolic structure of the given type over the points of $S$.

Given any stable parabolic bundle $E_*\, \in\, {\mathcal N}_P(r)$ there is a unique parabolic 
connection on $E_*$ such that its monodromy homomorphism $\pi_1(X\setminus S,\, x_0)\, 
\longrightarrow\, \text{GL}(r, {\mathbb C})$ has its image contained in $\text{U}(r)$ 
\cite{MS}. The construction of this unitary connection is not algebro-geometric/complex-analytic.
To see this consider the moduli space ${\mathcal NC}_P(r)$ of pairs of the form $(E_*,\, D_E)$,
where $E_*\, \in\, {\mathcal N}_P(r)$ and $D_E$ is a parabolic connection on $E_*$. Assigning
to every $E_*\, \in\, {\mathcal N}_P(r)$ the unique parabolic connection $D_E$ on $E_*$
with unitary monodromy, we obtain a section
$$
\tau_{UP}\,\,:\,\, {\mathcal N}_P(r)\, \,\longrightarrow\,\, {\mathcal NC}_P(r)
$$
of the projection ${\mathcal NC}_P(r) \,\longrightarrow\,{\mathcal N}_P(r)$ defined by $(E_*,
\, D_E)\, \longrightarrow\, E_*$; using this projection, ${\mathcal NC}_P(r)$ is
a holomorphic torsor over ${\mathcal N}_P(r)$ for $T^*{\mathcal N}_P(r)$.
This section $\tau_{UP}$ is not holomorphic, which shows
that it can't be constructed just using the algebro-geometric or complex-analytic methods.

Take a theta line bundle $\Theta$ on ${\mathcal N}_P(r)$. Let
$$
{\mathcal U}_P\, \longrightarrow\, {\mathcal N}_P(r)
$$
be the algebraic fiber bundle defined by the sheaf of holomorphic connections on
${\mathcal N}_P(r)$; the space of holomorphic sections of ${\mathcal U}_P$ over any
open subset $U\, \subset\, {\mathcal N}_P(r)$ is the space of all holomorphic connections
on $\Theta\big\vert_U$. This ${\mathcal U}_P$ is a holomorphic torsor over ${\mathcal N}_P(r)$
for $T^*{\mathcal N}_P(r)$.

A construction of Quillen gives a Hermitian structure on the line bundle $\Theta$. The
corresponding Chern connection on $\Theta$ defines a $C^\infty$ section
$$
\tau_{QP}\,\,:\,\, {\mathcal N}_P(r)\, \,\longrightarrow\,\, {\mathcal U}_P
$$
of the above projection ${\mathcal U}_P\, \longrightarrow\, {\mathcal N}_P(r)$.
We note that the map $\tau_{QP}$ is \textit{not} holomorphic, so
it can't be constructed just using the algebro-geometric or complex-analytic methods.

While the two sections $\tau_{UP}$ and $\tau_{QP}$ can't be constructed just using the 
algebro-geometric or complex-analytic methods, there is a natural holomorphic isomorphism
between the two $T^*{\mathcal N}_P(r)$--torsors ${\mathcal NC}_P(r)$ and ${\mathcal U}_P$
that takes one section to the other. More precisely, we prove the following (see Theorem
\ref{thm2} and Corollary \ref{cor3}):

\begin{theorem}\label{thm-i}
There is a natural holomorphic isomorphism
$$\Psi_P\, :\, {\mathcal NC}_P(r)\, \longrightarrow\, {\mathcal U}_P$$
between the two holomorphic $T^*{\mathcal N}_P(r)$--torsors ${\mathcal NC}_P(r)$ and ${\mathcal
U}_P$.

The isomorphism $\Psi_P$ takes the section $\tau_{UP}$ to
the section $\tau_{QP}$, in other words,
$$\Psi_P\circ\tau_{UP}\,=\, \tau_{QP}\, .$$
\end{theorem}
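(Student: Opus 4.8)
The plan is to produce $\Psi_P$ by an explicit \emph{induced-connection} construction and then to check in turn that it is a morphism of torsors, that it is bijective, and that it carries $\tau_{UP}$ to $\tau_{QP}$. First I would fix a universal parabolic bundle $\mathcal{E}$ on $\mathcal{N}_P(r)\times X$ and recall that the theta line bundle $\Theta$ is, up to a fixed power, the parabolic determinant of the direct image of $\mathcal{E}$ along the projection $\pi\,:\,\mathcal{N}_P(r)\times X\,\longrightarrow\,\mathcal{N}_P(r)$. The fiber of $\mathcal{NC}_P(r)$ over a point $E_*$ is the affine space of parabolic connections on $E_*$, whose difference vector space is the space $H^0(X,\,\mathrm{End}_P(E)\otimes K_X\otimes\mathcal{O}_X(S))$ of strongly parabolic endomorphism valued logarithmic forms; parabolic Serre duality identifies this space with $T^*_{[E_*]}\mathcal{N}_P(r)$, so it matches the torsor structure on $\mathcal{U}_P$, where the difference of two holomorphic connections on $\Theta$ is a holomorphic $1$-form on $\mathcal{N}_P(r)$.

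To construct the map itself, I would exploit the functoriality of the Deligne--Knudsen--Mumford determinant. A parabolic connection $D$ on $E_*$ is the same as a connection in the fiber direction of the universal bundle over the point $[E_*]$; combined with the Kodaira--Spencer comparison between deformations of $E_*$ and the tangent space of $\mathcal{N}_P(r)$, it furnishes a canonical splitting of the Atiyah sequence of $\Theta$ at $[E_*]$, i.e.\ a holomorphic connection on $\Theta_{[E_*]}$. Performing this in families over $\mathcal{NC}_P(r)$ defines $\Psi_P\,:\,\mathcal{NC}_P(r)\,\to\,\mathcal{U}_P$, and $\Psi_P$ is holomorphic because every ingredient --- the universal bundle, the determinant, and the Kodaira--Spencer homomorphism --- is complex-analytic. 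At the level of classifying classes this reflects the equality, in $H^1(\mathcal{N}_P(r),\,\Omega^1)$, of the class of $\mathcal{NC}_P(r)$ with the Atiyah class of $\Theta$, which is the abstract reason the two torsors are isomorphic.

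Next I would verify that $\Psi_P$ intertwines the two $T^*\mathcal{N}_P(r)$-actions, which is enough to conclude it is an isomorphism of torsors. Replacing $D$ by $D+\beta$, with $\beta\in H^0(X,\,\mathrm{End}_P(E)\otimes K_X\otimes\mathcal{O}_X(S))$, changes the induced connection on $\Theta$ by the trace pairing of $\beta$ against the Kodaira--Spencer class; under the two Serre-duality identifications this is precisely the action of the cotangent vector corresponding to $\beta$. Hence $\Psi_P$ is an affine bijection covering the identity of $\mathcal{N}_P(r)$, that is, an isomorphism of $T^*\mathcal{N}_P(r)$-torsors, any overall numerical factor coming from the rank and the parabolic weights being absorbed into the normalization of $\Theta$ so that the actions agree exactly.

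The substantive point is the identity $\Psi_P\circ\tau_{UP}\,=\,\tau_{QP}$. By construction the left-hand side is the connection on $\Theta$ induced by the unique flat unitary parabolic connection on $E_*$, while the right-hand side is the Chern connection of Quillen's Hermitian metric on $\Theta$. To match them I would invoke Quillen's computation of the curvature of the determinant metric: for a family of flat unitary parabolic bundles the induced determinant connection and the Quillen Chern connection both induce the fixed holomorphic structure of $\Theta$ and have curvature equal to the same Atiyah--Bott--Goldman form, and a connection compatible with a given Hermitian metric and holomorphic structure is unique, so the two coincide. I expect this last step to be the main obstacle, since it is exactly where the algebraically defined induced connection must be reconciled with the transcendentally defined Quillen connection, and where Quillen's analytic curvature formula is indispensable.
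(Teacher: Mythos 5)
Your route is genuinely different from the paper's: you try to build $\Psi_P$ directly on ${\mathcal N}_P(r)$ by ``inducing'' a connection on the determinant/theta line bundle from a parabolic connection on $E_*$, whereas the paper reduces everything to the non-parabolic case. It passes to a ramified Galois cover $\varphi\colon Z\to X$ with cyclic Galois group $\Gamma$ (Corollary \ref{cor1}), identifies ${\mathcal N}_P(r)$ with a component of ${\mathcal M}(r)^\Gamma$, shows ${\mathcal NC}_P(r)=\Phi^{-1}({\mathcal N}_P(r))^\Gamma$ and ${\mathcal U}_P=\widehat{\mathcal U}^\Gamma$, and then restricts the already-known isomorphism $\Psi$ of \cite{BH} to the fixed-point loci after checking (Lemma \ref{lem4}, via a $\Gamma$-invariant theta characteristic) that $\Psi$ is $\Gamma$-equivariant; the statement $\Psi_P\circ\tau_{UP}=\tau_{QP}$ is then inherited from $\Psi\circ\tau_U=\tau_Q$ of \cite{BH}.

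There are two genuine gaps in your sketch, both located exactly at the hard points. First, the assertion that a parabolic connection $D$ on $E_*$, ``combined with the Kodaira--Spencer comparison,'' canonically splits the Atiyah sequence of $\Theta$ at $[E_*]$ is not a formal consequence of functoriality of the determinant of cohomology. A connection in the fiber direction of $\mathcal E$ does not by itself differentiate $R\pi_*\mathcal E$ in base directions; trivializing the Atiyah class of $\det R\pi_*\mathcal E$ requires substantial extra input (the trace-complex machinery of Beilinson--Schechtman, or the Szeg\H{o}-kernel/theta-characteristic construction of \cite{BB}, \cite{BH}, which moreover only works off the theta divisor and must then be extended across it). This is precisely where the content of \cite{BB} and \cite{BH} lies, so you cannot treat it as routine; you would also need to verify the normalization (the factor $2r$ in the torsor structure on ${\mathcal U}_P$) rather than ``absorb'' it. Second, your argument for $\Psi_P\circ\tau_{UP}=\tau_{QP}$ does not close: two connections on a line bundle with the same curvature need not be equal (they can differ by a closed $1$-form), and the uniqueness of the Chern connection only applies once you know the induced connection is compatible with the Quillen metric --- but that compatibility is exactly the theorem you are trying to prove (it is Theorem 1.1 of \cite{BH} in the non-parabolic case), not something Quillen's curvature formula hands you. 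As written, this step is circular. If you want to avoid redoing the analysis, the efficient fix is the paper's: quote \cite{BH} on the cover $Z$ and descend by $\Gamma$-equivariance.
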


The isomorphism $\Psi_P$ in Theorem \ref{thm-i} is in fact algebraic. To describe the strategy 
of its construction, it is first observed that both the $T^*{\mathcal N}_P(r)$--torsors 
${\mathcal NC}_P(r)$ and ${\mathcal U}_P$ admit algebraic section over a certain nonempty 
Zariski open subset $U_0$ of ${\mathcal N}_P(r)$. This isomorphism between ${\mathcal NC}_P(r)$ 
and ${\mathcal U}_P$ over $U_0$, given by the trivializations, actually extend 
to an isomorphism between ${\mathcal NC}_P(r)$ and ${\mathcal U}_P$ over entire ${\mathcal 
N}_P(r)$, thus producing the isomorphism $\Psi_P$.

This is a parabolic analog of earlier works \cite{BB} and \cite{BH}.

\section{Coverings with specified ramifications}

Let $Y$ be a compact connected oriented $C^\infty$ surface of genus $g$. Fix finitely many points
\begin{equation}\label{e1}
S\,=\, \{p_1,\, \cdots, \, p_n\}\, \subset\, Y\, .
\end{equation}
Fix an integer $N\, \geq\, 2$.

\begin{lemma}\label{lem1}
There is a ramified covering
$$
\varphi\, :\, Z\, \longrightarrow\, Y
$$
of degree $N$, where $Z$ is a compact connected oriented $C^\infty$ surface, such that following conditions
hold:
\begin{enumerate}
\item $\varphi$ is orientation preserving,

\item the map $\varphi$ is totally ramified over every point $p_i\, \in\, S$ in \eqref{e1}, and

\item $\text{Aut}(Z/Y)$ (the group of holomorphic automorphisms of $Z$
over the identity map of $Y$) is isomorphic to ${\mathbb Z}/N{\mathbb Z}$ and
$\text{Aut}(Z/Y)$ acts transitively on the fibers of $\varphi$ (meaning $\varphi$ is
ramified Galois with Galois group ${\mathbb Z}/N{\mathbb Z}$).
\end{enumerate}
\end{lemma}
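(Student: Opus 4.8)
The plan is to realize $\varphi$ as a cyclic branched covering, built from a suitable surjection of the fundamental group of a punctured version of $Y$ onto $\mathbb{Z}/N\mathbb{Z}$. First I would recall the standard presentation: fixing a base point $y_0$, the group $\pi_1(Y\setminus S,\, y_0)$ is generated by the $2g$ handle classes $a_1, b_1, \ldots, a_g, b_g$ together with loops $c_1, \ldots, c_n$ encircling the punctures $p_1, \ldots, p_n$, subject to the single relation $\prod_{i=1}^g [a_i,b_i]\cdot\prod_{j=1}^n c_j \,=\, 1$. Since the target $\mathbb{Z}/N\mathbb{Z}$ is abelian, any homomorphism $\rho$ kills the commutators, so the only surviving constraint is $\sum_{j=1}^n \rho(c_j)\,=\,0$ in $\mathbb{Z}/N\mathbb{Z}$. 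For $\varphi$ to be totally ramified over $p_j$ I need the local monodromy $\rho(c_j)$ to have order $N$, i.e. to be a generator, and the most economical choice is $\rho(c_j)=1$ for every $j$.

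The main obstacle is precisely this compatibility requirement: the surface relation forces $\sum_j \rho(c_j) = n$ to vanish modulo $N$, which fails whenever $N\nmid n$, so one cannot in general make every $c_j$ a generator using the points of $S$ alone (already for $N=2$ this recovers the classical parity constraint on the number of branch points). I would resolve this by enlarging the branch locus, which is permitted since condition (2) constrains only the ramification over the $p_i$. Concretely, choose one auxiliary point $q\,\in\, Y\setminus S$, set $S'\,=\,S\cup\{q\}$, and define $\rho$ on $\pi_1(Y\setminus S')$ by $\rho(c_j)=1$ for $1\le j\le n$, $\rho(c_q)\,=\,-n \bmod N$, and $\rho(a_i)=\rho(b_i)=0$. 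Then the defining relation holds by construction, and $\rho$ is surjective because $\rho(c_1)=1$ generates $\mathbb{Z}/N\mathbb{Z}$ (using $n\ge 1$, as in the setting of the paper).

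With $\rho$ in hand I would construct the covering and then compactify. The kernel of $\rho$ determines a connected regular covering of $Y\setminus S'$ of degree $N$ with deck group $\pi_1(Y\setminus S')/\ker\rho\,\cong\,\mathbb{Z}/N\mathbb{Z}$. I would fill in the punctures in the usual way: near each $p_j$ the restriction of the covering to a small punctured disk is, by the local monodromy $1$, the connected $N$-sheeted cover modelled on $w\mapsto w^N$, whose missing center I adjoin to obtain a smooth point of ramification index $N$; near $q$ the $\gcd(n,N)$ components are completed analogously. This yields a compact connected surface $Z$ and an extension $\varphi:Z\to Y$, and the deck transformations extend to holomorphic automorphisms of $Z$ over $Y$.

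Finally I would verify the three assertions. Endowing $Y$ with a conformal structure compatible with its orientation and pulling it back through $\varphi$ makes $Z$ a Riemann surface and $\varphi$ holomorphic, so $\text{Aut}(Z/Y)\cong\mathbb{Z}/N\mathbb{Z}$ consists of holomorphic automorphisms, giving the relevant part of (3); the local models $w\mapsto w^N$ are orientation preserving, which yields (1). Condition (2) holds because $\rho(c_j)=1$ has order $N$, so there is a single preimage over each $p_j$ with full ramification index $N$. Transitivity of the $\mathbb{Z}/N\mathbb{Z}$--action in (3) is immediate over unbranched points, where the covering is regular, and persists over the branch points by continuity of the extended action.
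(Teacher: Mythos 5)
Your argument is correct, and it takes a genuinely different route from the paper. You construct the covering from its monodromy: a surjection $\rho\,:\,\pi_1(Y\setminus S')\,\longrightarrow\,{\mathbb Z}/N{\mathbb Z}$ sending each peripheral loop $c_j$ to a generator, with a single auxiliary branch point $q$ inserted so that the surface relation $\sum_j\rho(c_j)+\rho(c_q)\,=\,0$ can be satisfied, followed by the standard compactification of the associated unramified cover. This is the Riemann-existence style argument; it is uniform in the genus, and it isolates the one genuine obstruction (the local monodromies must sum to zero in ${\mathbb Z}/N{\mathbb Z}$) and repairs it with one extra point. The paper instead first enlarges $S$ so that $n-1$ is coprime to $N$, writes down the explicit cyclic plane curve $y^N\,=\,\prod_{i=1}^{n-1}(z-c_i)$ over ${\mathbb C}{\mathbb P}^1$, and then handles positive genus by an equivariant connected sum of $N$ copies of a genus $g$ surface glued along the $N$ disks lying over an unbranched disk; it also records an alternative construction as the cyclic cover attached to an $N$-th root of the line bundle ${\mathcal O}_Y(p_1+\cdots+p_n)$ (after making $n$ a multiple of $N$). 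Both routes exploit the freedom, explicitly allowed by the lemma, to add branch points outside $S$. Your version is arguably the cleaner existence proof; the paper's versions are more explicitly geometric and, in the line-bundle form, connect directly with the equivariant-bundle formalism used later. Two minor points worth making explicit in your write-up: the lemma as stated is about $C^\infty$ surfaces, so the choice of conformal structure you introduce at the end is really the content of Corollary \ref{cor1} rather than of the lemma itself (for the lemma one only needs that the local models $w\,\longmapsto\, w^N$ are orientation preserving and that the deck action extends smoothly); and the identification ${\rm Aut}(Z/Y)\,\cong\,{\mathbb Z}/N{\mathbb Z}$ should be justified by noting that a deck transformation of a connected (branched) cover is determined by its value at one unramified point, so the deck group cannot be larger than the fiber, hence equals the image of $\rho$.
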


\begin{proof}
Note that $\varphi$ is allowed to have ramifications over points of $Y\setminus S$. So 
adding some more points to $S$ in \eqref{e1} we may assume that the integer $n-1$ (see 
\eqref{e1}) is odd and coprime to $N$.

First assume that $g\,=\, 0$. So $(Y,\, S)$ is ${\mathbb C}{\mathbb P}^1$ with $n$ marked
points $\{c_1,\, \cdots,\, c_n\}$. Consider ${\mathbb C}{\mathbb P}^1$ as
${\mathbb C}\bigcup\{\infty\}$, and set $c_n\,=\, \infty$; so $c_i\, \in\, \mathbb C$
for all $1\, \leq\, i\, \leq\, n-1$. Now let $\mathcal Z$ be the
normalization of the complex projective curve
in ${\mathbb C}{\mathbb P}^1\times {\mathbb C}{\mathbb P}^1$ defined by the equation
$$
y^N\,=\, \prod_{i=1}^{n-1} (z-c_i)\, ,\ \ z\, \in\, {\mathbb C}\cup\{\infty\}\,=\, {\mathbb C}{\mathbb P}^1\, .
$$
We note that $\mathcal Z$ is a smooth complex projective curve in ${\mathbb C}{\mathbb P}^1\times {\mathbb C}{\mathbb P}^1$
of genus $\frac{1}{2}n(N-1) -N +1$; the genus is computed by
the Riemann--Hurwitz formula. The natural projection
\begin{equation}\label{e2}
\varphi_1\, :\, {\mathcal Z}\, \longrightarrow\, {\mathbb C}{\mathbb P}^1\, ,\ \ (y,\, z)\, \longmapsto\, z
\end{equation}
is of degree $N$, and it is totally ramified over $\{c_1,\, \cdots,\, c_{n-2},\, c_{n-1},\, c_n\,=\, \infty\}$.
The group consisting of all $N$-th roots of $1$ acts on ${\mathcal Z}$; the action of any
$\lambda$ with $\lambda^N\,=\, 1$ sends any
$(y,\, z)\,\in\, {\mathcal Z}$ to $(\lambda\cdot y,\, z)$. Consequently,
the map $\varphi_1$ in \eqref{e2} is ramified Galois with Galois group ${\mathbb Z}/N{\mathbb Z}$.
Also, $\varphi_1$ is orientation preserving as it is
holomorphic. This proves the lemma when $g\,=\, 0$.

For a general $g$, take an embedded disk ${\mathbb D}\, \subset\, {\mathbb C}{\mathbb P}^1$ such that
$$
{\mathbb D}\bigcap \{c_1,\, \cdots,\, c_{n-2},\, c_{n-1},\, \infty\}\,=\, \emptyset\, .
$$
So for the map $\varphi_1$ in \eqref{e2}, the inverse image $\varphi^{-1}_1({\mathbb D})$ is a disjoint
union of $N$ copies of the disk ${\mathbb D}$.

Let $Y'$ be a compact connected oriented $C^\infty$ surface of genus $g$, and let
\begin{equation}\label{ae}
Y\,=\, Y'\# {\mathbb C}{\mathbb P}^1
\end{equation}
be the connected sum constructed using the disk ${\mathbb D}$ in ${\mathbb C}{\mathbb P}^1$.
Let
$$
Z\,=\,\overbrace{Y'\# \cdots \# Y'}^{N\text{--copies}}\# {\mathcal Z}
$$
be the connected sum of ${\mathcal Z}$ with $N$--copies of $Y'$
constructed using the $N$ disks $\varphi^{-1}_1({\mathbb D})$. Use each copy of ${\mathbb D}$ in
$\varphi^{-1}_1({\mathbb D})$ to
attach a copy of $Y'$ to $\mathcal Z$; so each attachment is like the one in \eqref{ae}.

The above constructed $Z$ is a $C^\infty$ compact connected oriented surface. It's genus is
$N(g-1)+\frac{1}{2}n(N-1)+1$. The map $\varphi_1$ and the identity map of $Y'$ together produce a map
$$
\varphi\, :\, Z\, \longrightarrow\, Y
$$
of degree $N$. This map $\varphi$ is evidently ramified Galois with Galois group ${\mathbb 
Z}/N{\mathbb Z}$, and it is orientation preserving. Also, $\varphi$ is totally ramified 
over $\{c_1,\, \cdots,\, c_{n-2},\, c_{n-1},\, \infty\}$. This completes the proof.

\textbf{An alternative construction.}\, Increasing $n$ if necessary, assume that $n$ is a multiple of 
$N$, say $n\,=\, dN$. Let $Y$ be a compact Riemann surface of genus $g$, and fix a holomorphic line bundle
$L$ on $Y$ of degree $d$ such that
$$
L^{\otimes N}\,=\, {\mathcal O}_Y(p_1+\ldots + p_n)\, .
$$
Let $s\, \in\, H^0(Y, \, {\mathcal O}_Y(p_1+\ldots + p_n))$ be the holomorphic section given by the
constant function $1$ on $Y$. Now define
$$
Z\,=\, \{v\, \in\, L\,\, \mid\, \, v^{\otimes N}\, \in\, s(Y)\}\, \subset\, L\, .
$$
Let
\begin{equation}\label{v2}
\varphi\, :\, Z\, \longrightarrow\, Y
\end{equation}
be the restriction of the natural projection $L\, \longrightarrow\, Y$ to $Z\, \subset\, L$;
it is totally ramified over $\{p_1,\, \cdots, \, p_n\}$. Note that for the multiplication of
elements of $L$ by $\mathbb C$, the multiplication action of the $N$-th roots of $1$ preserves
$Z\,\subset\, L$. This multiplication action of the $N$-th roots of $1$
gives the group of deck transformations for the ramified covering map
$\varphi$ in \eqref{v2}. So $(Z,\, \varphi)$ satisfies all the conditions in the lemma.
\end{proof}

Lemma \ref{lem1} has the following immediate corollary:

\begin{corollary}\label{cor1}
Let $Y$ be a compact connected Riemann surface, and let $$S\,=\, \{p_1,\, \cdots, \, p_n\}\, \subset\, Y$$
be a finite subset. For any given integer $N\, \geq\, 2$, there is a nonconstant holomorphic map
$$
\varphi\, :\, Z\, \longrightarrow\, Y
$$
from a compact connected Riemann surface $Z$ such that
\begin{enumerate}
\item the degree of $\varphi$ is $N$,

\item $\varphi$ is a ramified Galois covering map with Galois group ${\mathbb Z}/N{\mathbb Z}$, and

\item $\varphi$ is totally ramified over every point of $S$.
\end{enumerate}
\end{corollary}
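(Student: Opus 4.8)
The plan is to deduce the corollary from Lemma \ref{lem1} by transporting the $C^\infty$ covering produced there into the holomorphic category. First I would forget the complex structure on the given Riemann surface $Y$, regarding it as a compact connected oriented $C^\infty$ surface, with the orientation induced by its complex structure. Applying Lemma \ref{lem1} to this $C^\infty$ surface, with the same finite subset $S$ and the same integer $N$, produces an orientation-preserving $C^\infty$ ramified covering $\varphi\, :\, Z\, \longrightarrow\, Y$ of degree $N$, with $\text{Aut}(Z/Y)\,\cong\, {\mathbb Z}/N{\mathbb Z}$ acting transitively on the fibers, and totally ramified over each point of $S$. All three numbered conclusions of the corollary are already built into this map at the topological level; the only remaining task is to put a complex structure on $Z$ for which $\varphi$ is holomorphic and the deck group acts holomorphically.

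Next I would equip $Z$ with such a complex structure by the standard branched-cover construction. Let $R\,\subset\, Z$ be the (finite) ramification locus and $B\,=\, \varphi(R)$ the branch locus. Over $Y\setminus B$ the map $\varphi$ restricts to an orientation-preserving $C^\infty$ covering onto its image, so I can pull back the complex structure of $Y$ to $Z\setminus R$, making $\varphi$ a local biholomorphism there; the orientation-preserving hypothesis guarantees that the pulled-back conformal structure together with the orientation defines a genuine complex structure. To extend this complex structure across a ramification point $q\,\in\, R$ of ramification index $e$, I would choose a holomorphic coordinate $z$ on $Y$ centred at $\varphi(q)$; a local coordinate $w$ on $Z$ near $q$ determined by $z\,=\, w^{e}$ extends the complex structure holomorphically over $q$ and realizes $\varphi$ locally as $w\,\longmapsto\, w^{e}$, with $e\,=\, N$ at the points lying over $S$. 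This is precisely the classical procedure endowing a branched cover of a Riemann surface with a Riemann surface structure, so $Z$ becomes a compact connected Riemann surface and $\varphi$ a nonconstant holomorphic map.

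Finally I would check that the Galois action survives in the holomorphic category. Each deck transformation $g\,\in\, \text{Aut}(Z/Y)$ satisfies $\varphi\circ g\,=\, \varphi$; on $Z\setminus R$, where $\varphi$ is a local biholomorphism, this forces $g$ to be holomorphic, and since $g$ is continuous and $R$ is finite, the Riemann removable singularity theorem lets $g$ extend to a biholomorphism of all of $Z$. Hence ${\mathbb Z}/N{\mathbb Z}\,\cong\, \text{Aut}(Z/Y)$ acts by biholomorphisms, so $\varphi$ is a ramified Galois covering with Galois group ${\mathbb Z}/N{\mathbb Z}$, of degree $N$, totally ramified over $S$, as required.

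I expect the only point requiring genuine care is the extension of the complex structure across the ramification points, together with the verification that the deck transformations remain holomorphic there; everything else is a direct transcription of the conclusions of Lemma \ref{lem1}. Since this extension is nothing more than the classical construction of the Riemann surface structure on a branched cover over a Riemann surface, the corollary indeed follows immediately.
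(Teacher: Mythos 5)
Your proposal is correct and is exactly the paper's approach: the paper's entire proof is the one-line instruction to put a complex structure on $Z$ from Lemma \ref{lem1} making $\varphi$ holomorphic, and you have simply filled in the standard details (pulling back the complex structure off the ramification locus, extending via $z\,=\,w^{e}$ at ramification points, and checking the deck transformations remain holomorphic by the removable singularity theorem).
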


\begin{proof}
In Lemma \ref{lem1}, put a complex structure on the surface $Z$ such that the map $\varphi$ is holomorphic.
\end{proof}

Let $Z$ be a compact connected Riemann surface of genus $g$. The holomorphic cotangent
bundle of $Z$ will be denoted by $K_Z$. A \textit{theta characteristic} on $Z$ is a holomorphic
line bundle ${\mathcal L}$ on $Z$ of degree $g-1$ such that there is a holomorphic isomorphism
of line bundles
$$
{\mathcal L}^{\otimes 2}\, \stackrel{\sim}{\longrightarrow}\, K_Z\, .
$$
If $f\, :\, Z\, \longrightarrow\, Z$ is a holomorphic automorphism, and ${\mathcal L}\, \longrightarrow\, Z$
is a holomorphic line bundle such that ${\mathcal L}^{\otimes 2}$ is holomorphically isomorphic to
$K_Z$, then
$$
(f^*{\mathcal L})^{\otimes 2}\,=\, f^*({\mathcal L}^{\otimes 2})\,=\, f^*K_Z\,
\stackrel{\sim}{\longrightarrow}\, K_Z\, ;
$$
the last isomorphism is the dual $(df)^*\, :\, f^*K_Z\,\longrightarrow\, K_Z$
of the differential $df \, :\, TZ\, \longrightarrow\, f^*TZ$ of $f$.

Let $\text{Aut}(Z)$ denote the group of all holomorphic automorphisms of $Z$. Let $\Gamma\, \subset\, 
\text{Aut}(Z)$ be a finite subgroup. A theta characteristic ${\mathcal L}$ on $Z$ is 
called $\Gamma$--\textit{invariant} if the following condition holds: For every $\gamma\, \in\, \Gamma$,
the pulled back line bundle $\gamma^*{\mathcal L}$ is holomorphically isomorphic to $\mathcal L$.

\begin{lemma}\label{lem2}
Let $\Gamma\, \subset\, {\rm Aut}(Z)$ be a finite cyclic group.
Then there is a $\Gamma$--invariant theta characteristic on $Z$.
\end{lemma}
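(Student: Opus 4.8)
The plan is to translate the statement into the language of quadratic forms on mod-$2$ cohomology and then exhibit a fixed point of the induced affine action. Write $V\,:=\,H^1(Z,\,\mathbb{Z}/2\mathbb{Z})$, an $\mathbb{F}_2$--vector space of dimension $2g$ equipped with the nondegenerate alternating Weil (cup product) pairing $\langle\cdot,\cdot\rangle$. By the classical theory of Riemann, Mumford and Atiyah, the set $\Theta(Z)$ of theta characteristics on $Z$ is in canonical bijection with the set $Q$ of quadratic forms $q\colon V\to\mathbb{F}_2$ refining the pairing, i.e. satisfying $q(x+y)=q(x)+q(y)+\langle x,y\rangle$, the bijection sending $\mathcal{L}$ to $q_{\mathcal{L}}(\alpha)=h^0(\mathcal{L}\otimes\alpha)+h^0(\mathcal{L})\bmod 2$. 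Since this correspondence is defined by the natural operations $h^0$ and $\otimes$, it is equivariant for the actions induced by $\mathrm{Aut}(Z)$. As $\Gamma$ is cyclic, generated by some $\gamma$, a theta characteristic is $\Gamma$--invariant exactly when it is fixed by $\gamma$; so it suffices to produce a single $q\in Q$ fixed by the automorphism $A\in\mathrm{Sp}(V,\langle\cdot,\cdot\rangle)$ that $\gamma$ induces on $V$.

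Next I would set up the affine problem. Fix any $q\in Q$; every other element of $Q$ is $q+\langle w,\cdot\rangle$ for a unique $w\in V$, so $Q$ is a torsor over $V$. Because $A$ preserves $\langle\cdot,\cdot\rangle$, the function $\ell\colon x\mapsto q(Ax)+q(x)$ is $\mathbb{F}_2$--linear (the terms $\langle x,y\rangle$ cancel when expanding $\ell(x+y)$), so $\ell=\langle v_0,\cdot\rangle$ for a unique $v_0\in V$ measuring the failure of $q$ to be $A$--invariant. A direct computation, using the relation $\langle w,Ax\rangle=\langle A^{-1}w,x\rangle$, shows that $q+\langle w,\cdot\rangle$ is $A$--invariant if and only if $(A^{-1}-I)w=v_0$. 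Hence an $A$--invariant, equivalently $\Gamma$--invariant, theta characteristic exists if and only if $v_0\in\mathrm{Im}(A^{-1}-I)=\mathrm{Im}(A-I)$, the last equality holding because $\mathrm{Im}(A-I)$ is $A$--stable.

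The crux is to verify this membership, and here the symplectic structure does all the work. Since $\langle\cdot,\cdot\rangle$ is nondegenerate and $A$ is symplectic, one has $\langle(A-I)u,\,v\rangle=\langle u,\,(A^{-1}-I)v\rangle$, giving the inclusion $\mathrm{Im}(A-I)\subseteq(V^A)^{\perp}$, where $V^A=\ker(A-I)$ is the fixed subspace; comparing dimensions, $\dim\mathrm{Im}(A-I)=2g-\dim V^A=\dim(V^A)^{\perp}$, upgrades this to the equality $\mathrm{Im}(A-I)=(V^A)^{\perp}$. It then remains only to check that $v_0\perp V^A$. But for $v\in V^A$ we have $Av=v$, so $\langle v_0,v\rangle=\ell(v)=q(Av)+q(v)=q(v)+q(v)=0$. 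Thus $v_0\in(V^A)^{\perp}=\mathrm{Im}(A-I)$, a compatible $w$ exists, and the corresponding theta characteristic is $\Gamma$--invariant.

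I expect the only delicate points to be bookkeeping rather than conceptual: confirming the equivariance of the Riemann--Mumford correspondence (immediate from its naturality) and getting the variance of the $A$--action right in the affine step, where replacing $A$ by $A^{-1}$ throughout changes nothing since a form is fixed by $A$ if and only if it is fixed by $A^{-1}$. The essential input is that $A$ is symplectic: the same argument would fail for a general affine action, and it is precisely the identity $\mathrm{Im}(A-I)=(V^A)^{\perp}$ together with the tautological vanishing of $\ell$ on $V^A$ that forces the obstruction to die. The cyclicity of $\Gamma$ enters only through the reduction to a single generator $\gamma$; for a noncyclic group one would additionally need a form fixed simultaneously by all generators, which this method does not provide.
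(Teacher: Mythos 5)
Your argument is correct, but it takes a genuinely different route from the paper. The paper's proof is essentially a citation: it fixes a generator $\gamma$ of the cyclic group and invokes Atiyah's result (\cite[p.~61, Proposition 5.2]{At2}) for the existence of a theta characteristic $\mathcal{L}$ with $\gamma^*\mathcal{L}\cong\mathcal{L}$, the reduction to a single generator being the only step where cyclicity is used --- exactly as in your last paragraph. What you have written is, in effect, a self-contained proof of the cited input: you pass to quadratic refinements $q$ of the mod-$2$ intersection pairing on $V=H^1(Z,\mathbb{F}_2)$ via the Riemann--Mumford--Atiyah correspondence, observe that the induced action of the symplectic automorphism $A$ on the $V$--torsor of refinements is affine with obstruction class $v_0$ determined by $\ell(x)=q(Ax)+q(x)$, and kill the obstruction using $\mathrm{Im}(A-I)=(V^A)^{\perp}$ together with the tautological vanishing of $\ell$ on $V^A$. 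All the individual steps check out (linearity of $\ell$ from $A$ being symplectic, the adjunction $\langle (A-I)u,v\rangle=\langle u,(A^{-1}-I)v\rangle$, the dimension count, and the equality $\mathrm{Im}(A^{-1}-I)=\mathrm{Im}(A-I)$), and the variance issues you flag are genuinely harmless since a form is $A$--fixed if and only if it is $A^{-1}$--fixed. The trade-off is the usual one: the paper's version is two lines resting on a standard reference, while yours is longer but makes completely transparent where cyclicity enters (only in reducing to one generator) and why the obstruction must vanish (symplecticity), at the cost of assuming the equivariant bijection between theta characteristics and quadratic refinements, which is itself a nontrivial classical theorem.
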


\begin{proof}
Fix a generator $\gamma\, \in\, 
\Gamma$ of the cyclic group. From \cite[p.~61, Proposition 5.2]{At2} we know that there is 
a holomorphic line bundle ${\mathcal L}$ on $Z$ such that
\begin{itemize}
\item the holomorphic line bundle ${\mathcal L}^{\otimes 2}$ is holomorphically isomorphic to
$K_Z$, and

\item $\gamma^*{\mathcal L}$ is holomorphically isomorphic to ${\mathcal L}$.
\end{itemize}
This $\mathcal L$ is a $\Gamma$--invariant theta characteristic on $Z$.
\end{proof}

\section{Parabolic bundles and equivariant bundles}

\subsection{Parabolic bundles}\label{se3.1}

We briefly recall the basic definitions regarding parabolic bundles; more details can
be found in \cite{MS}, \cite{MY}.

Let $X$ be a compact connected Riemann surface. Fix a nonempty finite subset
$$S\,=\,\{x_1,\, \cdots,\, x_n\}\, \subset\, X\, .$$
A quasiparabolic bundle on $X$ with quasiparabolic structure on $S$ is a
holomorphic vector bundle $E$ over $X$ of positive rank together with
a strictly decreasing filtration of subspaces
\begin{equation}\label{e3}
E_{x_i}\,=\, E^1_i\, \supsetneq\, E^2_i \,\supsetneq\, \cdots\, \supsetneq\,
E^{n_i}_i \, \supsetneq\, E^{n_i+1}_i \,=\, 0
\end{equation}
for every $1\, \leq\, i\, \leq\, n$; here $E_{x_i}$ denotes the fiber
of $E$ over the point $x_i\,\in\, X$. A \textit{parabolic structure} on $E$ over $D$ is a
quasiparabolic structure as above together with $n$ increasing sequences of rational numbers
\begin{equation}\label{e5}
0\, \leq\, \alpha^1_i\, <\, \alpha^2_i\, <\,
\cdots\, < \, \alpha^{n_i}_i \, < 1\, , \ \ 1\, \leq\, i\, \leq\, n \, ;
\end{equation}
the rational number $\alpha^j_i$ is called the parabolic weight of the subspace $E^j_i$ in
the quasiparabolic filtration in \eqref{e3}. To clarify, $j$ in $\alpha^j_i$ is an index
and not an exponent. The multiplicity of a parabolic weight
$\alpha^j_i$ at $x_i$ is defined to be the dimension of the complex
vector space $E^j_i/E^{j+1}_i$.
A parabolic vector bundle is a holomorphic vector bundle with a
parabolic structure.

The \textit{parabolic degree} of a parabolic bundle $E_*\,:=\, \left(E,\, \{E^j_i\}, \,\{\alpha^j_i\}\right)$
is defined to be
$$
\text{par-deg}(E_*)\,=\, \text{degree}(E)+\sum_{i=1}^n \sum_{j=1}^{n_i}
\alpha^j_i \dim \left(E^j_i/E^{j+1}_i\right)
$$
\cite[p.~214, Definition~1.11]{MS}, \cite[p.~78]{MY}. The number
$$
\text{par-}\mu(E_*)\,:=\, \frac{\text{par-deg}(E_*)}{{\rm rank}(E)}\, \in\, {\mathbb Q}
$$
is called the {\it parabolic slope}.

Take any holomorphic subbundle $F\, \subset\, E$. For each $x_i\, \in\, S$, the
fiber $F_{x_i}$ has a filtration of subspaces obtained by intersecting the quasiparabolic filtration
of $E_{x_i}$ with the subspace $F_{x_i}\, \subset\, E_{x_i}$. The parabolic weight of a subspace $B\,
\subset\, F_{x_i}$ occurring in this filtration is the maximum of the numbers
$$\{\alpha^j_i\, \,\mid\,\, B\, \subset \, E^j_i\cap F_{x_i}\}\, .$$
This way, the parabolic structure on $E$ produces a parabolic structure on the subbundle $F$.
The resulting parabolic bundle will be denoted by $F_*$.

A parabolic vector bundle $$E_*\,=\, \left(E,\, \{E^j_i\}, \,\{\alpha^j_i\}\right)$$ is called
\textit{stable} (respectively, \textit{semistable}) if for all holomorphic subbundles
$F\, \subsetneq\, E$ of positive rank the following inequality holds:
$$
\text{par-}\mu(F_*) \, <\, \text{par-}\mu(E_*)\ \ \
(\text{respectively, }\, \ \text{par-}\mu(F_*) \, \leq\, \text{par-}\mu(E_*)).
$$

A parabolic bundle is called \textit{polystable} if it is a direct sum of stable
parabolic bundles of same parabolic slope.

Let
\begin{equation}\label{eb0}
\text{End}_P(E_*)\, \subset\, \text{End}(E)\,=\, E\otimes E^*
\end{equation}
be the coherent analytic subsheaf defined by the following condition: A holomorphic
section $s\, \in\, H^0(U,\, \text{End}(E)\big\vert_U)$, where $U\, \subset\, X$ is any
open subset, is a section of $\text{End}_P(E_*)\big\vert_U$ if $s(E^j_i)\, \subset\, E^j_i$
for all $x_i\, \in\, S\bigcap U$ and all $1\, \leq\, j\, \leq\, n_i$. Let
\begin{equation}\label{eb}
\text{End}_N(E_*)\, \subset\, \text{End}_P(E_*)
\end{equation}
be the coherent analytic subsheaf defined by the following condition: A holomorphic
section $s\, \in\, H^0(U,\, \text{End}_P(E_*)\big\vert_U)$, where $U\, \subset\, X$ is any
open subset, is a section of $\text{End}_N(E_*)\big\vert_U$ if $s(E^j_i)\, \subset\, E^{j+1}_i$
for all $x_i\, \in\, S\bigcap U$ and all $1\, \leq\, j\, \leq\, n_i$. It may be mentioned that
the origin of the subscript ``N'' in \eqref{eb} is ``nilpotent''.

\subsection{Connections on a parabolic bundle}\label{se3.2}

Take $X$ and $S$ as in Section \ref{se3.1}. Let $V$ be a holomorphic
vector bundle on $X$. A \textit{logarithmic connection} on $V$ singular over
$S$ is a holomorphic differential operator
$$
D\, :\, V\, \longrightarrow\, V\otimes K_X\otimes {\mathcal O}_X(S)
$$
satisfying the Leibniz identity which says that
\begin{equation}\label{e4}
D(fs)\,=\, fD(s)+ s\otimes df
\end{equation}
for any locally
defined holomorphic function $f$ on $X$ and any locally defined holomorphic section $s$ of $V$. 

Take a point $y\, \in\, S$. The fiber of
$K_X\otimes{\mathcal O}_X(S)$ over $y$ is identified with $\mathbb C$ by the Poincar\'e adjunction
formula \cite[p.~146]{GH}. To explain this isomorphism
\begin{equation}\label{pa}
(K_X\otimes{\mathcal O}_X(S))_y \, \stackrel{\sim}{\longrightarrow}\, {\mathbb C}\, ,
\end{equation}
let $z$ be a holomorphic coordinate function on $X$ defined on an analytic
open neighborhood of $y$ such that $z(y)\,=\, 0$. Then we have an isomorphism
${\mathbb C}\, \longrightarrow\, (K_X\otimes{\mathcal O}_X(S))_y$ that sends any $c\, \in\, \mathbb C$
to $c\cdot \frac{dz}{z}(y)\,\in\, (K_X\otimes{\mathcal O}_X(S))_y$. It is straightforward to check that this map
${\mathbb C}\, \longrightarrow\, (K_X\otimes{\mathcal O}_X(S))_y$ is actually independent of the choice
of the holomorphic coordinate function $z$.

Let $D_V\, :\, V\, \longrightarrow\, V\otimes K_X\otimes{\mathcal O}_X(S)$ be a logarithmic connection
on $V$. From \eqref{e4} it follows that the composition of homomorphisms
$$
V\, \xrightarrow{\,\ D_V\,\ }\, V\otimes K_X\otimes{\mathcal O}_X(S) \, \longrightarrow\,
(V\otimes K_X\otimes{\mathcal O}_X(S))_y\,\stackrel{\sim}{\longrightarrow}\, V_y
$$
is ${\mathcal O}_X$--linear; the above isomorphism $(V\otimes K_X\otimes{\mathcal O}_X(S))_y\,
\stackrel{\sim}{\longrightarrow}\, V_y$ is given by the isomorphism in
\eqref{pa}. Therefore, this composition of homomorphisms produces a
$\mathbb C$--linear homomorphism
$$
{\rm Res}(D_V,\,y)\, :\, V_y\, \longrightarrow\, V_y\, ,
$$
which is called the \textit{residue} of $D_V$ at $y$; see \cite{De}.

Take a parabolic vector bundle $E_*\,=\, \left(E,\, \{E^j_i\}, \,\{\alpha^j_i\}\right)$.

\begin{definition}\label{dlc}
A \textit{connection} on $E_*$ is a logarithmic connection $D$ on $E$, singular over
$S$, such that
\begin{enumerate}
\item $\text{Res}(D,\,x_i)(E^j_i)\, \in\, \text{End}_P(E_*)_{x_i}$
for all $x_i\, \in\, S$ (see \eqref{eb0}),

\item the endomorphism of $E^j_i/E^{j+1}_i$ induced by $\text{Res}(D,\,x_i)$ coincides with
multiplication by the parabolic weight $\alpha^j_i$ for all $1\,\leq\, j\,\leq\, n_i$,
$1\,\leq\, i\, \leq\, n$, and

\item the local monodromy of $D$ around each $x_i\, \in\, S$ is semisimple (an endomorphism
of a finite dimensional complex vector space is called semisimple if it is diagonalizable).
\end{enumerate}
(See \cite[Section~2.2]{BL}.)
\end{definition}

It should be clarified that the third condition in Definition \ref{dlc} is put only because all
connections considered here actually satisfy this condition. For example, an orbifold/equivariant connection automatically
satisfies this condition. In this sense this condition is superfluous.

\begin{lemma}\label{los}
A parabolic vector bundle $E_*$ admits a parabolic connection if and only if $E_*$ is a direct sum of indecomposable
parabolic vector bundles of parabolic degree zero.

Let $E_*$ be a parabolic vector bundle admitting a connection. Then the space of connections on 
$E_*$ is an affine space for the vector space $H^0(X,\, {\rm End}_N(E_*)\otimes 
K_X\otimes{\mathcal O}_X(S))$, where ${\rm End}_N(E_*)$ is defined in \eqref{eb}.
\end{lemma}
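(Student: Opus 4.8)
The plan is to prove the two assertions of Lemma \ref{los} separately, treating the existence criterion first and then the affine-space structure.

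For the existence statement, the strategy is to reduce to the indecomposable case and invoke a parabolic analogue of the classical Weil--Atiyah criterion. First I would recall that a parabolic connection, as in Definition \ref{dlc}, is precisely a logarithmic connection $D$ on $E$ whose residue at each $x_i$ acts on the graded pieces $E^j_i/E^{j+1}_i$ by the scalars $\alpha^j_i$. The Fuchsian relation for logarithmic connections says that the ordinary degree of $E$ equals minus the sum of traces of residues; once the residue eigenvalues are pinned to the parabolic weights, this relation becomes exactly the assertion that $\text{par-deg}(E_*)=0$. Thus a necessary condition for admitting a connection is $\text{par-deg}(E_*)=0$, and this must hold on every indecomposable summand since residues and degrees are additive over direct sums. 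Conversely, I would show that an indecomposable parabolic bundle of parabolic degree zero admits such a connection; the cleanest route is to formulate the obstruction to existence as an Atiyah-type class in $H^1(X,\,\text{End}_P(E_*)\otimes K_X\otimes{\mathcal O}_X(S))$ (or the appropriate parabolic Atiyah bundle extension class) and argue that for an indecomposable bundle this obstruction vanishes iff the parabolic degree is zero. The direct-sum statement then follows because a direct sum admits a connection precisely when each summand does (one can always assemble block-diagonal connections, and conversely project a connection to each summand when the slopes match).

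For the affine-space statement, the key observation is that the difference of two parabolic connections is ${\mathcal O}_X$-linear. If $D_1$ and $D_2$ are both connections on $E_*$, then by the Leibniz identity \eqref{e4} the difference $D_1-D_2$ is a holomorphic ${\mathcal O}_X$-linear homomorphism $E\,\longrightarrow\, E\otimes K_X\otimes{\mathcal O}_X(S)$, i.e.\ a global section of $\text{End}(E)\otimes K_X\otimes{\mathcal O}_X(S)$. I would then verify that the two residue conditions in Definition \ref{dlc} force this difference to land in $\text{End}_N(E_*)\otimes K_X\otimes{\mathcal O}_X(S)$. Indeed, both $D_1$ and $D_2$ have residues preserving the flag (condition (1)) and inducing multiplication by $\alpha^j_i$ on each graded piece (condition (2)); hence the residue of $D_1-D_2$ preserves the flag and induces the \emph{zero} endomorphism on each $E^j_i/E^{j+1}_i$, which is exactly the defining nilpotency condition \eqref{eb} for $\text{End}_N(E_*)$. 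This shows $D_1-D_2\,\in\, H^0(X,\,\text{End}_N(E_*)\otimes K_X\otimes{\mathcal O}_X(S))$. Conversely, adding any such section to a connection preserves all three conditions, so the space of connections is a torsor over this vector space; combined with nonemptiness from the first part, it is an affine space.

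The main obstacle I anticipate is the first (existence) part, specifically the converse direction: showing that parabolic degree zero suffices for an indecomposable parabolic bundle. The subtlety is that one must produce a logarithmic connection whose residues match the prescribed parabolic weights with the prescribed graded action, not merely any logarithmic connection; this requires controlling both the existence of a logarithmic connection with the right residue eigenvalues and its compatibility with the flag. I expect to handle this by working with the parabolic Atiyah sequence and identifying the obstruction class, using that indecomposability makes the relevant endomorphism algebra local (so that the trace map detects the obstruction precisely through the parabolic degree). The affine-space part, by contrast, is essentially a direct computation with the Leibniz rule and the residue conditions, and should present no real difficulty.
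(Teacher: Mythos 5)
Your affine-space argument is essentially the one in the paper: the paper takes a connection $D$ and a section $\theta$ of $\text{End}_N(E_*)\otimes K_X\otimes{\mathcal O}_X(S)$ and checks that $D+\theta$ is again a connection, while you emphasize the complementary direction that $D_1-D_2$ lands in that sheaf; together these give the torsor structure, and your residue computation for the difference is correct. The one point you pass over too quickly is condition (3) of Definition \ref{dlc} for $D+\theta$: you assert that adding $\theta$ ``preserves all three conditions'' without saying why the local monodromy stays semisimple. The mechanism, which the paper indicates via \cite{De}, is that the weights $\alpha^1_i<\cdots<\alpha^{n_i}_i$ are distinct and lie in $[0,1)$, so the residue of $D+\theta$ is non-resonant and the local monodromy is $\exp(-2\pi\sqrt{-1}\,\mathrm{Res})$; moreover any endomorphism preserving the flag \eqref{e3} and acting by these distinct scalars on the graded pieces has minimal polynomial dividing $\prod_j(t-\alpha^j_i)$, hence is semisimple, and so is its exponential. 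You should include that one line.

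For the existence criterion the comparison is different in kind: the paper does not prove it, it cites \cite{Bi3} and \cite{BL}, whereas you propose to reprove it by a parabolic Weil--Atiyah argument --- which is in fact what those references do. Your necessity argument (the residue theorem forcing $\text{par-deg}(E_*)=0$ once the residues are pinned to the weights, followed by projection of a connection onto each indecomposable summand) is sound. But the converse --- that an indecomposable parabolic bundle of parabolic degree zero admits a connection --- is exactly the hard step, and your proposal leaves it as a plan (``identify the obstruction \dots and argue that it vanishes''). If the intent is a self-contained proof, that is a genuine gap; if you are content to quote \cite{Bi3}, \cite{BL} as the paper does, it disappears. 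Two concrete corrections to your sketch: the obstruction class should be placed in $H^1(X,\,\text{End}_N(E_*)\otimes K_X\otimes{\mathcal O}_X(S))$ --- the sheaf for which connections form a torsor, by the second half of the lemma --- and not in $H^1$ of $\text{End}_P(E_*)\otimes K_X\otimes{\mathcal O}_X(S)$; and by Serre duality this $H^1$ pairs with $H^0(X,\,\text{End}_P(E_*))$, the pairing of the obstruction with $\mathrm{Id}_E$ giving the parabolic degree while the pairing with nilpotent endomorphisms vanishes. That last vanishing is the substantive point that your remark about the endomorphism algebra being local gestures at but does not establish.
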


\begin{proof}
The first statement is proved in \cite{Bi3}, \cite{BL}.

To prove the second statement, let $D$ be a connection on $E_*$. Take any
$$
\theta\, \in\, H^0(X,\, \text{End}_N(E_*)\otimes K_X\otimes{\mathcal O}_X(S))
$$
and consider the differential operator $D+\theta$; it is a logarithmic connection on $E$. Note that $D+\theta$
satisfies the first two conditions in Definition \ref{dlc}. Therefore, to prove that $D+\theta$ is a connection
on $E_*$ it suffices to show that the local monodromy of $D+\theta$ around each $x_i\,\in\,S$ is semisimple.

Next we recall how the local monodromy around $x_i$ is given by the residue at $x_i$. If $D'$ is a logarithmic
connection whose residue at a point $z$ is $R$, then the local monodromy of $D'$ at $z$ is
$\exp(-2\pi\sqrt{-1}R)$ \cite{De}. Using this property of residue, and the given condition
that $D$ is a connection on $E_*$, it is now straightforward to check
that the local monodromy of $D+\theta$ around each $x_i\, \in\, S$ is semisimple.
\end{proof}

\subsection{Equivariant vector bundles}\label{se3.3}

Take a compact connected Riemann surface $Z$, and let $\Gamma\, \subset\,
\text{Aut}(Z)$ be a finite subgroup of the group of all holomorphic automorphisms of
$Z$. So the quotient $Z/\Gamma$ is a compact connected Riemann surface. We call $Z$ a (ramified)
Galois covering of $Z/\Gamma$; the Galois group is $\Gamma$.

An \textit{equivariant} vector bundle over $Z$ is a holomorphic vector bundle $$F\, 
\longrightarrow\, Z$$ equipped with a lift of the tautological action of $\Gamma$ on $Z$. This 
means that $\Gamma$ acts on $F$ and the action of any $\gamma\, \in\, \Gamma$ on $F$ is a 
holomorphic automorphism of the holomorphic vector bundle $F$ over the automorphism $\gamma$ of 
$Z$.

An equivariant vector bundle $F$ is called equivariantly stable (respectively, 
equivariantly semistable) if for every holomorphic subbundle $F'\, \subset\, F$ such that
\begin{itemize}
\item $0\, <\, {\rm rank}(F')\, <\, {\rm rank}(F)$, and

\item the action of $\Gamma$ on $F$ preserves $F'$,
\end{itemize}
the following inequality holds:
$$
\frac{\text{degree}(F')}{\text{rank}(F')}\, <\,
\frac{\text{degree}(F)}{\text{rank}(F)}\ \
\left(\text{ respectively,}\, \ \frac{\text{degree}(F')}{\text{rank}(F')} \, \leq\,
\frac{\text{degree}(F)}{\text{rank}(F)}\right).
$$

We note that given an equivariant bundle $F$, it is equivariantly semistable if and only if the 
holomorphic vector bundle $F$ is semistable in the usual sense. Indeed, this follows 
immediately from the fact that the Harder--Narasimhan filtration of $F$ is preserved by 
the action of $\Gamma$. If the holomorphic vector bundle $F$ is stable in the usual sense, 
then clearly the equivariant bundle $F$ is equivariantly stable. However, the holomorphic 
vector bundle underlying an equivariantly stable bundle need not be stable in the usual 
sense. To give such an example, let $V_0$ be an irreducible complex $\Gamma$--module
of dimension at least two. Then for the diagonal action of $\Gamma$ on $Z\times V_0$, the
trivial vector bundle $Z\times V_0\, \longrightarrow\, Z$ is equivariantly stable, while this
trivial vector bundle is not stable in the usual sense.

There is an equivalence of categories between the parabolic vector bundles with given 
rational parabolic weights on $X$ and the equivariant bundles on some suitable ramified 
Galois covering of $X$ \cite{Bi1}, \cite{Bo1}, \cite{Bo2}. We recall below some properties 
of this equivalence of categories.

Let $E_*$ be a parabolic bundle such that at any parabolic point $x_i\, \in\, S$
the parabolic weights $\alpha^j_i$ in \eqref{e5} are of the form $\frac{b_{i,j}}{N_{i,j}}$,
where $b_{i,j}$ and $N_{i,j}$ are coprime integers; assume that for every $i$ at least one
of the parabolic weights at $x_i$ is nonzero. Let $N_i$ be the least common multiple of
the integers $N_{i,1},\, \cdots,\, N_{i, n_i}$. If
$$
\varphi\, :\, Z\, \longrightarrow\, X
$$
is a ramified Galois covering, and $F$ is an equivariant vector bundle on $Z$ that corresponds to
$E_*$ by the above mentioned equivalence of categories, then
\begin{itemize}
\item $\varphi$ is ramified over each $x_i\, \in\, S$, and

\item the order of ramification of $\varphi$ at any $y\, \in\, \varphi^{-1}(x_i)$ is an
integral multiple of the above integer $N_i$.
\end{itemize}
The action of the Galois group $\Gamma\,=\, {\rm Gal}(\varphi)$ on the equivariant bundle $F$ produces an
action of $\Gamma$ on the direct image $\varphi_*F$; this action of $\Gamma$ on $\varphi_*F$ is over the
trivial action of $\Gamma$ on $X$. The holomorphic vector bundle $E$ underlying the parabolic bundle
$E_*$ is identified with the invariant part $(\varphi_*F)^\Gamma\,\subset\, \varphi_*F$ for this
action of $\Gamma$.

Conversely, if
\begin{equation}\label{e6}
\varphi\, :\, Z\, \longrightarrow\, X
\end{equation}
is a ramified Galois covering such that
\begin{itemize}
\item $\varphi$ is ramified over each $x_i\, \in\, S$, and

\item the order of ramification of $\varphi$ at any $y\, \in\, \varphi^{-1}(x_i)$ is an
integral multiple of the above integer $N_i$,
\end{itemize}
then there is a unique, up to an equivariant isomorphism, equivariant vector bundle $F$ on $Z$ that
corresponds to the parabolic vector bundle $E_*$.

In view of Corollary \ref{cor1}, given parabolic data, there is a is a ramified Galois covering 
$$
\varphi\, :\, Z\, \longrightarrow\, X
$$
such that
\begin{enumerate}
\item The parabolic bundles of given parabolic type correspond to equivariant bundles on $Z$,

\item the Galois group ${\rm Aut}(Z/X)$ is cyclic, and

\item $\varphi$ is totally ramified over the points of $S$.
\end{enumerate}

A parabolic vector bundle $E_*$ is parabolic stable (respectively, parabolic semistable) if and
only if the corresponding equivariant bundle is equivariantly stable (respectively, equivariantly semistable);
see \cite{Bi1}, \cite{Bo1}, \cite{Bo2} for the details.

As in \eqref{e6}, let $\varphi$ be a ramified Galois covering with Galois group $\Gamma$, where
both $Z$ and $X$ are compact connected Riemann surfaces. Let $F$ be an equivariant bundle on $Z$.

We recall the definition of a holomorphic connection on $F$. A holomorphic connection on a holomorphic vector
bundle $W$ on $Z$ is a holomorphic differential operator
$$
D_W\, :\, W\, \longrightarrow\, W\otimes K_Z,
$$
where $K_Z$ as before is the holomorphic cotangent bundle of $Z$, such that for any locally defined holomorphic
section $s$ of $W$, and any locally defined holomorphic function $f$, the Leibniz identity
$$
D_W(fs) \,=\, f\cdot D_W(s)+ s\otimes df
$$
is satisfied (see \cite{At1}). The Leibniz identity actually ensures that the order of the differential operator $D_W$ is
one.

An \textit{equivariant connection} on the equivariant bundle $F$ on $Z$ is a holomorphic connection $D$ on $F$
such that the action of $\Gamma$ on $F$ preserves $D$.

If $F$ admits a holomorphic connection, then $F$ admits an equivariant connection. Indeed,
if $D_F$ is a holomorphic connection on $F$, then the holomorphic connection
$$
D'_F\,:=\, \frac{1}{\#\Gamma} \sum_{\gamma\in\Gamma}\gamma^* D_F
$$
is preserved by the action of $\Gamma$ on $F$. If $F$ admits an equivariant connection, then
the space of all equivariant connections on $F$ is an affine space for the invariant vector space
\begin{equation}\label{ic2}
H^0(Z, \, \text{End}(F)\otimes K_Z)^\Gamma \, \subset\, H^0(Z, \, \text{End}(F)\otimes K_Z).
\end{equation}

Let $D$ be an equivariant connection on $F$. Then $D$ induces a logarithmic connection on the
direct image $\varphi_* F$; this logarithmic connection on $\varphi_* F$ will be denoted by $\varphi_* D$.
The singular locus of $\varphi_* D$ is precisely the divisor on $X$ over which $\varphi$ is ramified.
The logarithmic connection $\varphi_* D$ on $\varphi_* F$ preserves the subbundle
$(\varphi_* F)^\Gamma\, \subset\, \varphi_* F$ fixed by the
action of the group $\Gamma$. The resulting logarithmic connection on $(\varphi_* F)^\Gamma$
will be denoted by $D'$.

As noted before, if $E_*$ is the parabolic vector bundle on $X$ corresponding to an
equivariant bundle $F$ on $Z$, then the holomorphic vector bundle $E$ underlying $E_*$ is 
$(\varphi_* F)^\Gamma$. Recall the definition connections on a parabolic 
bundle (see Definition \ref{dlc}). The above logarithmic connection $D'$ on $(\varphi_* F)^\Gamma\,=\, E$
given by $\varphi_* D$ is actually 
a connection on the parabolic vector bundle $E_*$ \cite{Bi2}, \cite{Bi3}. In fact this 
produces a natural bijection between the equivariant connections on $F$ and the 
connections on the corresponding parabolic bundle $E_*$ (see \cite{Bi2}, \cite{Bi3}).

Consider $H^0(Z, \, \text{End}(F)\otimes K_Z)^\Gamma$ in \eqref{ic2}. We have
$$
H^0(Z, \, \text{End}(F)\otimes K_Z)^\Gamma\,=\,H^0(X,\, {\rm End}_N(E_*)\otimes
K_X\otimes{\mathcal O}_X(S)),$$
where ${\rm End}_N(E_*)$ is defined in \eqref{eb}. For any
$\theta\, \in\, H^0(Z, \, \text{End}(F)\otimes K_Z)^\Gamma$, let
\begin{equation}\label{tp}
\theta'\, \in\, H^0(X,\, {\rm End}_N(E_*)\otimes
K_X\otimes{\mathcal O}_X(S))
\end{equation}
be the element that corresponds to it by the above isomorphism.

Now from Lemma \ref{los} we have the following:

\begin{lemma}\label{ic}
If a parabolic vector bundle $E_*$ corresponds to an equivariant vector bundle $F$, then
there is a natural bijection between the connections on $E_*$ and the equivariant
connections on $F$. Moreover, for any connection $D_E$ on $E_*$ if $D_F$ is the
corresponding equivariant connection on $F$, then for every
$\theta\, \in\, H^0(Z, \, \text{End}(F)\otimes K_Z)^\Gamma$, the equivariant connection
$D_F+\theta$ corresponds to the connection $D_E+\theta'$ on $E_*$ (see \eqref{tp} for
$\theta'$).
\end{lemma}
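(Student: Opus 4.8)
The plan is to deduce the statement by combining the bijection and the two affine descriptions already recorded above, so that the only point requiring genuine verification is the affine compatibility $(D_F+\theta)'\,=\,D'+\theta'$. First I would recall the bijection itself: given an equivariant connection $D_F$ on $F$, its direct image $\varphi_*D_F$ is a logarithmic connection on $\varphi_*F$ that preserves the invariant subbundle $E\,=\,(\varphi_*F)^\Gamma$, and the induced logarithmic connection $D'$ on $E$ is a connection on $E_*$; the assignment $D_F\,\longmapsto\, D'$ is the bijection in question, as established above and in \cite{Bi2}, \cite{Bi3}. Lemma \ref{los} guarantees that the two sets are simultaneously nonempty: $E_*$ carries a connection exactly when it is a direct sum of indecomposables of parabolic degree zero, which under the equivalence of categories is precisely the condition that $F$ admit an equivariant connection (obtained, if need be, by the averaging construction recorded above).

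For the affine compatibility I would fix a connection $D_E$ on $E_*$ with corresponding equivariant connection $D_F$ on $F$, and take $\theta\,\in\, H^0(Z,\,\text{End}(F)\otimes K_Z)^\Gamma$. Because $\theta$ is $\mathcal{O}_Z$-linear, the connections $D_F$ and $D_F+\theta$ differ by a tensorial term, so the Leibniz contributions cancel under direct image and $\varphi_*(D_F+\theta)\,=\,\varphi_*D_F+\varphi_*\theta$, where $\varphi_*\theta$ denotes the section of $\text{End}(\varphi_*F)\otimes K_X\otimes\mathcal{O}_X(S)$ induced by $\theta$ through the natural map $\varphi_*(\text{End}(F)\otimes K_Z)\,\longrightarrow\,\text{End}(\varphi_*F)\otimes K_X\otimes\mathcal{O}_X(S)$. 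Restricting to $E\,=\,(\varphi_*F)^\Gamma$ and using that the isomorphism $H^0(Z,\,\text{End}(F)\otimes K_Z)^\Gamma\,=\,H^0(X,\,\text{End}_N(E_*)\otimes K_X\otimes\mathcal{O}_X(S))$ is induced by exactly this direct-image-and-invariants procedure, I find that the restriction of $\varphi_*\theta$ to $E$ is precisely $\theta'$. Hence $(D_F+\theta)'\,=\,D'+\theta'$, and together with the affine description of connections on $E_*$ furnished by Lemma \ref{los} this yields the full statement.

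The step I expect to be the main obstacle is the local analysis at the ramification points of $\varphi$, where one must confirm both that $\varphi_*\theta$ genuinely lands in $\text{End}_N(E_*)\otimes K_X\otimes\mathcal{O}_X(S)$---that is, in the nilpotent part $\text{End}_N$ rather than merely in $\text{End}_P$---and that its invariant part coincides with $\theta'$. Away from the branch locus $\varphi$ is \'etale and the additivity of the direct image is immediate; near a ramification point one works in a local $\Gamma$-equivariant coordinate and checks that the pole acquired in passing from $K_Z$ to $K_X$ is exactly absorbed by the twist $\mathcal{O}_X(S)$, while $\Gamma$-invariance forces the residue of the perturbation into $\text{End}_N$. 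Since this is the very computation underlying the isomorphism of spaces of sections quoted above, no work beyond that identification is in fact needed.
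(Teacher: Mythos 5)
Your proposal is correct and follows essentially the same route as the paper, which obtains Lemma \ref{ic} directly from the bijection of \cite{Bi2}, \cite{Bi3} recalled in Section \ref{se3.3}, the affine structure in Lemma \ref{los}, and the stated identification $H^0(Z, \, \text{End}(F)\otimes K_Z)^\Gamma\,=\,H^0(X,\, {\rm End}_N(E_*)\otimes K_X\otimes{\mathcal O}_X(S))$. You simply make explicit the additivity of the direct image on the tensorial perturbation $\theta$ and the local check at ramification points, both of which the paper leaves implicit in that identification.
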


\section{Two natural constructions}

\subsection{A canonical connection}

We will now recall a construction from \cite{BB}.
Let $Z$ be a compact connected Riemann surface of genus $g_Z$. Fix a theta characteristic 
${\mathcal L}$ on $Z$. Take a holomorphic vector bundle $V$ on 
$Z$ of rank $r$ and degree zero such that
\begin{equation}\label{ev}
H^0(Z,\, V\otimes{\mathcal L})\,=\, 0\, .
\end{equation}
By the Riemann--Roch theorem,
$$
\dim H^0(Z,\, V\otimes{\mathcal L})- \dim H^1(Z,\, V\otimes{\mathcal L})\,=\, r(g_Z-1)-r(g_Z-1)\,=\, 0\, .
$$
Therefore, we conclude that
\begin{equation}\label{e8}
H^0(Z,\, V\otimes{\mathcal L})\,=\, 0\,=\, H^1(Z,\, V\otimes{\mathcal L})\, .
\end{equation}
Hence, Serre duality gives that
\begin{equation}\label{e8a}
H^0(Z,\, V^*\otimes{\mathcal L})\,=\, 0\,=\, H^1(Z,\, V^*\otimes{\mathcal L})\, .
\end{equation}

\begin{remark}
In any locally complete holomorphic family of vector bundles of rank $r$ and degree zero on $Z$, the condition in \eqref{ev} defines
a divisor on the parameter space. In particular, the condition in \eqref{ev} defined a divisor on the moduli space of semistable
vector bundles on $Z$ of rank $r$ and degree zero; such a divisor is known as a {\it generalized} theta divisor
\cite{BNR}. A generalized theta divisor gives the ample generator of the Picard of the moduli space of semistable vector
bundles on $Z$ of rank $r$ and trivial determinant \cite{DN} (this Picard group is isomorphic to $\mathbb Z$).
\end{remark}

For $i\,=\, 1,\, 2$, let
\begin{equation}\label{e9}
p_i\, :\, Z\times Z\, \longrightarrow\, Z
\end{equation}
be the natural projection to the $i$-th factor. Let
\begin{equation}\label{e10}
\Delta_Z\, \subset\, Z\times Z
\end{equation}
be the reduced diagonal divisor; it will be identified with $Z$ by the map
$Z\, \longrightarrow\, \Delta_Z$ defined by $z\, \longmapsto\, (z,\, z)$.
Note that $$(p^*_1 {\mathcal L})\otimes (p^*_2 {\mathcal L}))\big\vert_{\Delta_Z}
\,=\, K_{\Delta_Z}\,=\, {\mathcal O}_{Z\times Z}(-\Delta_Z)\big\vert_{\Delta_Z}\, ,
$$
where $p_i$ and $\Delta_Z$ are defined in \eqref{e9} and \eqref{e10} respectively; the
above isomorphism $K_{\Delta_Z}\,=\, {\mathcal O}_{Z\times Z}(-\Delta_Z)\big\vert_{\Delta_Z}$
is given by the Poincar\'e adjunction formula \cite[p.~146]{GH}.

Consider the short exact sequence of coherent analytic sheaves on $Z\times Z$
\begin{equation}\label{e11}
0\, \longrightarrow\, (p^*_1(V\otimes{\mathcal L}))\otimes
(p^*_2(V^*\otimes{\mathcal L})) \, \longrightarrow\,
(p^*_1(V\otimes{\mathcal L}))\otimes
(p^*_2(V^*\otimes{\mathcal L}))
\otimes{\mathcal O}_{Z\times Z}(\Delta_Z)
\end{equation}
$$
\longrightarrow\, ((p^*_1 V)\otimes (p^*_2 V^*))\big\vert_{\Delta_Z}\otimes
((p^*_1 {\mathcal L})\otimes (p^*_2 {\mathcal L}){\mathcal O}_{Z\times Z}
(\Delta_Z))\big\vert_{\Delta_Z}\,=\, \text{End}(V) \, \longrightarrow\, 0\, ,
$$
where $\text{End}(V)$ is the torsion sheaf supported on $\Delta_Z\,\subset\, Z\times Z$;
here the above mentioned identification of $Z$ with $\Delta_Z$ is being used. Let
\begin{equation}\label{e12}
H^0(Z\times Z,\, p^*_1(V\otimes{\mathcal L})\otimes
p^*_2(V^*\otimes{\mathcal L})) \, \longrightarrow\,
H^0(Z\times Z,\, p^*_1(V\otimes{\mathcal L})\otimes
p^*_2(V^*\otimes{\mathcal L})
\otimes{\mathcal O}_{Z\times Z}(\Delta_Z))
\end{equation}
$$
\stackrel{\lambda_V}{\longrightarrow}\, H^0(Z,\, \text{End}(V))\,
\longrightarrow\,H^1(Z\times Z,\, p^*_1(V\otimes{\mathcal L})\otimes
p^*_2(V^*\otimes{\mathcal L}))
$$
be the long exact sequence of cohomologies associated to the short exact sequence
of sheaves in \eqref{e11}. Since
$$
H^i(Z\times Z,\, (p^*_1(V\otimes{\mathcal L}))\otimes
(p^*_2(V^*\otimes{\mathcal L})))\,=\, \bigoplus_{k=0}^i H^k(Z,\, V\otimes{\mathcal L})
\otimes H^{i-k}(Z,\, V^*\otimes{\mathcal L})\, ,
$$
using \eqref{e8} and \eqref{e8a} it follows that $H^i(Z\times Z,\,
(p^*_1(V\otimes{\mathcal L}))\otimes (p^*_2(V^*\otimes{\mathcal L})))\,=\, 0$ for all $i$.
Consequently, from \eqref{e12} we get an isomorphism
\begin{equation}\label{e13}
\lambda_V\,:\, H^0(Z\times Z,\, (p^*_1(V\otimes{\mathcal L}))\otimes
(p^*_2(V^*\otimes{\mathcal L}))
\otimes{\mathcal O}_{Z\times Z}(\Delta_Z)) \, \stackrel{\sim}{\longrightarrow}\,
H^0(Z,\, \text{End}(V))\, .
\end{equation}
Let
\begin{equation}\label{e14}
\Psi'_V\,\, :=\,\, (\lambda_V)^{-1}({\rm Id}_V)
\, \in\, H^0(Z\times Z,\, (p^*_1(V\otimes{\mathcal L}))\otimes
(p^*_2(V^*\otimes{\mathcal L}))
\otimes{\mathcal O}_{Z\times Z}(\Delta_Z))
\end{equation}
be the section given by the identity map of $V$ using the isomorphism $\lambda_V$
in \eqref{e13}.

On the other hand, the restriction of $(p^*_1 {\mathcal L})\otimes
(p^*_2 {\mathcal L})\otimes{\mathcal O}_{Z\times Z}(\Delta_Z)$ to the nonreduced divisor
$2\Delta_Z\, \subset\, Z\times Z$ has a canonical trivialization; this means that we have a section
\begin{equation}\label{e15a}
\sigma_Z\, \in\, H^0\left(2\Delta_Z,\, \left((p^*_1 {\mathcal L})\otimes
(p^*_2 {\mathcal L})\otimes{\mathcal O}_{Z\times Z}(\Delta_Z)\right)\big\vert_{2\Delta_Z}\right)
\end{equation}
that identifies $\left((p^*_1 {\mathcal L})\otimes
(p^*_2 {\mathcal L})\otimes{\mathcal O}_{Z\times Z}(\Delta_Z)\right)\big\vert_{2\Delta_Z}$
with ${\mathcal O}_{2\Delta_Z}$
\cite[(3.8)]{BH}, \cite[p.~688, Theorem 2.2]{BR}. This identification between
$\left((p^*_1 {\mathcal L})\otimes (p^*_2 {\mathcal L})\otimes{\mathcal O}_{Z\times Z}(\Delta_Z)\right)\big\vert_{2\Delta_Z}$
and ${\mathcal O}_{2\Delta_Z}$ restricts over $\Delta_Z$ to the natural isomorphism between
$\left((p^*_1 {\mathcal L})\otimes (p^*_2 {\mathcal L})\otimes{\mathcal O}_{Z\times Z}(\Delta_Z)\right)\big\vert_{\Delta_Z}$
and ${\mathcal O}_{\Delta_Z}$ given by the Poincar\'e adjunction formula.
Incorporating the trivialization $\sigma_Z$ in \eqref{e15a} in the
section $\Psi'_V$ in \eqref{e14}, we get a section
\begin{equation}\label{e15}
\Psi_V\,\, \in\,\, H^0\left(2\Delta_Z,\, \left((p^*_1 V)\otimes
(p^*_2V^*)\right)\big\vert_{2\Delta_Z}\right)\, ;
\end{equation}
in other words, we have
\begin{equation}\label{e15b}
(\Psi'_V)\big\vert_{2\Delta_Z}\,=\, (\Psi_V)\big\vert_{2\Delta_Z}
\otimes\sigma_Z\, .
\end{equation}
Since $\lambda_V(\Psi'_V)\,=\, {\rm Id}_V$, the section
$\Psi_V$ in \eqref{e15} satisfies the condition that
$$
\Psi_V\big\vert_{\Delta_Z}\,=\, {\rm Id}_V\, .
$$
Therefore, $\Psi_V$ defines a holomorphic connection on $V$; see \cite{BB}, \cite{BH}
for more details.

Let
\begin{equation}\label{e16}
D_V\, :\, V\, \longrightarrow\, V\otimes K_Z
\end{equation}
be the holomorphic connection on $V$ given by $\Psi_V$ in \eqref{e15}.

Let $\Gamma\, \subset\, {\rm Aut}(Z)$ be a finite cyclic subgroup of the group
of holomorphic automorphisms of $Z$. Assume that the vector bundle $V$ in
\eqref{ev} is equivariant for this subgroup $\Gamma$. Also, assume that the theta
characteristic $\mathcal L$ is $\Gamma$--invariant; note that from Lemma
\ref{lem2} we know that there are $\Gamma$--invariant theta characteristics.

\begin{proposition}\label{prop1}
The action of $\Gamma$ on $V$ preserves the holomorphic connection $D_V$ on
$V$ constructed in \eqref{e16}.
\end{proposition}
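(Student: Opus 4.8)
The plan is to prove the statement by exploiting the canonical, functorial nature of the entire construction of $D_V$. Since $\Gamma$ is cyclic, it suffices to fix a generator $\gamma\, \in\, \Gamma$ and show that $D_V$ is preserved by $\gamma$; that is, writing $\rho_\gamma\colon \gamma^*V \,\xrightarrow{\sim}\, V$ for the isomorphism furnished by the equivariant structure on $V$, I must show $(\rho_\gamma)_*(\gamma^* D_V)\,=\, D_V$, which is precisely the condition that the $\Gamma$--action preserves the connection. The key observation is that the section $\Psi_V$ of \eqref{e15}, from which $D_V$ is recovered in \eqref{e16}, lives in a space $H^0(2\Delta_Z,\, (p_1^*V \otimes p_2^*V^*)\big\vert_{2\Delta_Z})$ depending only on $V$, while the theta characteristic $\mathcal L$ enters only through the auxiliary objects $\Psi'_V$ and $\sigma_Z$, whose $\mathcal L$--dependence cancels in the relation \eqref{e15b}. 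Thus I only need to track how $\Psi_V$ transforms.

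First I would establish the naturality of the construction under pullback by the diagonal automorphism $\gamma\times\gamma$ of $Z\times Z$. This automorphism preserves $\Delta_Z$ and $2\Delta_Z$ and intertwines each $p_i$ with $\gamma\circ p_i$, so it carries the short exact sequence \eqref{e11} for the pair $(V,\, \mathcal L)$ to the analogous sequence for $(\gamma^*V,\, \gamma^*\mathcal L)$. Because the isomorphism $\lambda_V$ of \eqref{e13} is built purely from this sequence together with the vanishing \eqref{e8}--\eqref{e8a} (which hold for $\gamma^*V$ as well), and because $\mathrm{Id}_V$ pulls back to $\mathrm{Id}_{\gamma^*V}$, it follows that $(\gamma\times\gamma)^*\Psi'_V\,=\, \Psi'_{\gamma^*V}$, the latter constructed with $\gamma^*\mathcal L$. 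Likewise $(\gamma\times\gamma)^*\sigma_Z$ is the canonical trivialization \eqref{e15a} attached to $\gamma^*\mathcal L$: the Poincar\'e adjunction isomorphism \eqref{pa} commutes with the automorphism, and the identification $(\gamma^*\mathcal L)^{\otimes 2}\,\cong\, K_Z$ used here is obtained by applying $\gamma^*$ to $\mathcal L^{\otimes 2}\,\cong\, K_Z$ and composing with the codifferential $(d\gamma)^*\colon \gamma^*K_Z \,\xrightarrow{\sim}\, K_Z$. Dividing one by the other through \eqref{e15b} yields $(\gamma\times\gamma)^*\Psi_V\,=\, \Psi_{\gamma^*V}$, equivalently $\gamma^* D_V\,=\, D_{\gamma^*V}$, where $D_{\gamma^*V}$ denotes the connection produced by the same recipe applied to $\gamma^*V$ with the theta characteristic $\gamma^*\mathcal L$.

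Second I would invoke the naturality of the construction under isomorphisms of the input data. Since $\mathcal L$ is $\Gamma$--invariant (as guaranteed by Lemma \ref{lem2}), there is an isomorphism $\mu\colon \gamma^*\mathcal L \,\xrightarrow{\sim}\, \mathcal L$, which I may take to be compatible with the two identifications to $K_Z$; indeed, rescaling $\mu$ by $t$ changes the resulting discrepancy $c\,\in\, \mathbb C^*$ by $t^2$, so $c\,=\, 1$ is achievable up to an irrelevant sign. The pair $(\rho_\gamma,\, \mu)$ then carries the data $(\gamma^*V,\, \gamma^*\mathcal L)$ to $(V,\, \mathcal L)$, transporting $\Psi'_{\gamma^*V}$ to $\Psi'_V$ and the trivialization $\sigma_Z$ for $\gamma^*\mathcal L$ to the one for $\mathcal L$; in the ratio \eqref{e15b} the $\mu$--factors occur identically on numerator and denominator and cancel, so $(\rho_\gamma)_*\Psi_{\gamma^*V}\,=\, \Psi_V$, that is $(\rho_\gamma)_* D_{\gamma^*V}\,=\, D_V$. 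Combining the two steps gives $(\rho_\gamma)_*(\gamma^* D_V)\,=\, (\rho_\gamma)_* D_{\gamma^*V}\,=\, D_V$, which is exactly the assertion that $\gamma$ preserves $D_V$; as $\gamma$ generates $\Gamma$, the connection $D_V$ is $\Gamma$--invariant.

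I expect the main obstacle to be the bookkeeping in the first step: verifying rigorously that the canonical trivialization $\sigma_Z$ of \eqref{e15a} commutes with pullback by $\gamma\times\gamma$, which rests on the functoriality of the Poincar\'e adjunction formula and the compatibility of the $K_Z$--identification with the codifferential $(d\gamma)^*$. Everything else is formal once this functoriality is in place, and the potential dependence on the choice of $\mu$ is harmless precisely because $\Psi_V$, and hence $D_V$, is independent of $\mathcal L$.
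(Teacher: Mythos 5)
Your argument is correct, and it reaches the same underlying mechanism as the paper --- namely that both $\Psi'_V$ from \eqref{e14} and the canonical trivialization $\sigma_Z$ from \eqref{e15a} transform compatibly, so that $\Psi_V$ in \eqref{e15}, and hence $D_V$, is invariant --- but you package it differently. The paper constructs an honest lift of the $\Gamma$--action to the line bundle $(p^*_1{\mathcal L})\otimes(p^*_2{\mathcal L})$: it fixes $\beta\colon{\mathcal L}\to\gamma^*{\mathcal L}$, computes the obstruction scalar $c_0$ from $\beta^{\circ N}$, and solves $c^N=c_0$ subject to the requirement that $\frac{1}{c^2}\beta^{\otimes 2}$ agree with $((d\gamma)^*)^{-1}$ on $K_Z={\mathcal L}^{\otimes 2}$; this makes the whole sheaf in \eqref{e11} $\Gamma$--equivariant, whence $\Psi'_V$ and $\sigma_Z$ are literally $\Gamma$--invariant sections. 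You instead work one generator at a time via pullback by $\gamma\times\gamma$ followed by transport along an arbitrary isomorphism $\mu\colon\gamma^*{\mathcal L}\to{\mathcal L}$, and observe that the scalar ambiguity in $\mu$ enters $\Psi'_V$ and $\sigma_Z$ by the same factor and cancels in \eqref{e15b}. What your route buys is that you never need a genuine group action on $(p^*_1{\mathcal L})\otimes(p^*_2{\mathcal L})$ --- the cocycle condition $c^N=c_0$ is simply not needed to prove invariance of $D_V$ under each fixed $\gamma$ --- at the cost of having to verify the functoriality of $\lambda_V$ in \eqref{e13} and of $\sigma_Z$ under both pullback and transport, which you correctly identify as the real work. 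One caution: your repeated assertion that the ${\mathcal L}$--dependence ``cancels'' so that $\Psi_V$ and $D_V$ are \emph{independent of ${\mathcal L}$} is an overstatement. What your argument actually shows (and all it needs) is independence of the choice of isomorphism ${\mathcal L}^{\otimes 2}\cong K_Z$ and of the representative within a fixed isomorphism class; non-isomorphic theta characteristics give genuinely different sections $\Psi'_V$, different loci where \eqref{e8} holds, and in general different connections $D_V$. Since Lemma \ref{lem2} supplies $\gamma^*{\mathcal L}\cong{\mathcal L}$, the weaker statement suffices and your proof stands.
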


\begin{proof}
First note that the tautological 
action of $\Gamma$ on $Z$ has a natural lift to an action of $\Gamma$ on $K_Z$; the action 
of any $\gamma\, \in\, \Gamma$ on $K_Z$ coincides with the automorphism $((d\gamma)^*)^{-1}
\, :\, K_Z\,\longrightarrow\, K_Z$ over the automorphism $\gamma$ of $Z$.

Consider the diagonal action of $\Gamma$ on $Z\times Z$ given by the tautological
action of $\Gamma$ on $Z$. We will prove the following:
\begin{enumerate}
\item The holomorphic line bundle $(p^*_1 {\mathcal L})\otimes (p^*_2 {\mathcal L})$
on $Z\times Z$ is $\Gamma$--equivariant.

\item The $\Gamma$--equivariant line bundle $((p^*_1 {\mathcal L})\otimes (p^*_2 {\mathcal L}))^{\otimes 2}$
is $\Gamma$--equivariantly isomorphic to $(p^*_1 K_Z)\otimes (p^*_2 K_Z)$ (the action of $\Gamma$ on $K_Z$
produces an action of $\Gamma$ on $(p^*_1 K_Z)\otimes (p^*_2 K_Z)$).
\end{enumerate}

Let $N$ be the order of the group $\Gamma$.
Fix a generator $\gamma\, \in\, \Gamma$ of the cyclic group $\Gamma$ of order $N$. Since $\mathcal L$ is
$\Gamma$--invariant, the line bundle $\gamma^*{\mathcal L}$ is holomorphically isomorphic to ${\mathcal L}$.
Fix a holomorphic isomorphism
$$
\beta\, :\, {\mathcal L}\, \stackrel{\sim}{\longrightarrow}\, \gamma^*{\mathcal L}\, .
$$
Now consider the holomorphic isomorphism
$$
\overbrace{\beta\circ \cdots \circ\beta}^{N\text{--times}}\, :\,
{\mathcal L}\, \stackrel{\sim}{\longrightarrow}\, (\gamma^N)^*{\mathcal L}\,=\, {\mathcal L}\, .
$$
It is given by multiplication with some $c_0\, \in\, {\mathbb C}\setminus\{0\}$. Fix a
$c\, \in\, {\mathbb C}\setminus\{0\}$ such that
\begin{itemize}
\item $c^N\,=\, c_0$, and

\item the automorphism $$\frac{1}{c^2}\beta^{\otimes 2}\, :\,
{\mathcal L}^2\,\longrightarrow\, \gamma^*{\mathcal L}^2$$ coincides with the isomorphism
$((d\gamma)^*)^{-1}$ of $K_Z\,=\, {\mathcal L}^2$ with $\gamma^*K_Z\,=\, \gamma^*{\mathcal L}^2$
(note that this condition that
$\frac{1}{c^2}\beta^{\otimes 2}$ coincides with $((d\gamma)^*)^{-1}$ does not depend on the
choice of isomorphism between $K_Z$ and ${\mathcal L}^2$), and

\item the holomorphic automorphism 
$$
\frac{1}{c^2}(p^*_1\beta)\otimes (p^*_1\beta)\, :\, (p^*_1 {\mathcal L})\otimes (p^*_2 {\mathcal L})
\, \stackrel{\sim}{\longrightarrow}\, \gamma^*((p^*_1 {\mathcal L})\otimes (p^*_2 {\mathcal L}))
$$
gives a holomorphic lift of the diagonal action of $\Gamma$ on $Z\times Z$ to a holomorphic
action of $\Gamma$ on the line bundle $(p^*_1 {\mathcal L})\otimes (p^*_2 {\mathcal L})$.
\end{itemize}
It can be shown that there is exactly two solutions for $c$, and they differ only by the sign.

Since the diagonal action of $\Gamma$ on $Z\times Z$ preserves
the divisor $\Delta_Z\, \subset\, Z\times Z$, the holomorphic line bundle
${\mathcal O}_{Z\times Z}(\Delta_Z)$ is equipped with a lift of the action of
$\Gamma$. Also, the action of $\Gamma$ on $V$ evidently produces an action of $\Gamma$ on
$(p^*_1 V)\otimes (p^*_2V^*)$. Using these, together with
the above action of $\Gamma$ on $(p^*_1 {\mathcal L})\otimes (p^*_2 {\mathcal L})$,
it follows that $\Gamma$ acts on the vector bundle $(p^*_1(V\otimes{\mathcal L}))\otimes
(p^*_2(V^*\otimes{\mathcal L}))
\otimes{\mathcal O}_{Z\times Z}(\Delta_Z)$ in \eqref{e11}; this action lifts the
diagonal action of $\Gamma$ on $Z\times Z$. Therefore, $\Gamma$ acts on
$$
H^0(Z\times Z,\, (p^*_1(V\otimes{\mathcal L}))\otimes
(p^*_2(V^*\otimes{\mathcal L}))
\otimes{\mathcal O}_{Z\times Z}(\Delta_Z))\, .
$$

Consider the action of $\Gamma$ on $\text{End}(V)$ given by the action of $\Gamma$
on $V$. The action of $\Gamma$ on $\text{End}(V)$ produces an
action of $\Gamma$ on $H^0(Z,\, \text{End}(V))$. Note that the identity automorphism
$$
{\rm Id}_V\, \in\, H^0(Z,\, \text{End}(V))
$$
is preserved by this action of $\Gamma$ on $H^0(Z,\, \text{End}(V))$. This statement
that ${\rm Id}_V$ is preserved by the action of $\Gamma$ on $H^0(Z,\, \text{End}(V))$
is in fact equivalent to
the statement that the identity map of $V$ is $\Gamma$--equivariant. The isomorphism
$\lambda_V$ in \eqref{e13} is evidently $\Gamma$--equivariant. Since
${\rm Id}_V$ is fixed by the action of $\Gamma$, we now conclude that
$\Psi'_V$ in \eqref{e14} is $\Gamma$--invariant, meaning
\begin{equation}\label{e17}
\Psi'_V\,\,\in \,\, H^0(Z\times Z,\, (p^*_1(V\otimes{\mathcal L}))\otimes
(p^*_2(V^*\otimes{\mathcal L}))
\otimes{\mathcal O}_{Z\times Z}(\Delta_Z))^\Gamma\, .
\end{equation}

Recall that the line bundle $(p^*_1 {\mathcal L})\otimes (p^*_2 {\mathcal L})$ is
$\Gamma$--equivariant, and the divisor $\Delta_Z\, \subset\, Z\times Z$ is preserved
by the action of $\Gamma$.
The section $\sigma_Z$ in \eqref{e15a} is $\Gamma$--invariant; note that the
diagonal action of $\Gamma$ on $Z\times Z$ preserves $2\Delta_Z\, \subset\, Z\times
Z$. Now from \eqref{e17} and \eqref{e15b} we conclude that the section $\Psi_V$
in \eqref{e15} is $\Gamma$--invariant, in other words,
\begin{equation}\label{e18}
\Psi_V\,\, \in\,\, H^0\left(2\Delta_Z,\, \left((p^*_1 V)\otimes
(p^*_2V^*)\right)\big\vert_{2\Delta_Z}\right)^\Gamma\, .
\end{equation}

{}From \eqref{e18} it follows immediately that the holomorphic connection $D_V$ on
$V$ in \eqref{e16} is preserved by the action of $\Gamma$ on $V$.
\end{proof}

\subsection{A splitting}

We will describe a general property of holomorphic connections. Let $N$ be a connected
complex manifold and
\begin{equation}\label{eg}
G\, \subset\, \text{Aut}(N)
\end{equation}
a finite subgroup of the group of holomorphic
automorphisms of $N$. Then the fixed point locus
$$
N^G\,\,:=\,\, \{x\, \in\, N\, \big\vert\,\, G(x)\,=\, x\} \,\, \subset\,\, N ,$$ for the action of $G$ on $N$,
is a submanifold of $N$. Let
$$
F \,\subset\, N^G
$$
be a connected component. Let
\begin{equation}\label{l3}
\iota\,\, :\,\, F\,\, \hookrightarrow\,\, N
\end{equation}
be the inclusion map. So the action of $G$ on $N$ fixes $F$ pointwise.

Take a holomorphic vector bundle $V$ on $N$ equipped with a lift of the action of $G$ on $N$. In other 
words, $V$ is an equivariant holomorphic vector bundle. Consider the Atiyah bundle
$\text{At}(V)$ on $N$ associated to $V$. We recall $\text{At}(V)$ is the sheaf of holomorphic differential 
operators $D$ from $V$ to $V$ of order at most one whose symbol is of the form $s\otimes{\rm 
Id}_V$, where $s$ is a holomorphic vector field on the open subset of $N$ where $D$ is defined 
(see \cite{At1}). So $\text{At}(V)$ fits in a short exact sequence of holomorphic vector
bundles on $N$
\begin{equation}\label{atex}
0\, \longrightarrow\, \text{End}(V) \,=\, V\otimes V^*\, \longrightarrow\, \text{At}(V)\,
\longrightarrow\, TN \, \longrightarrow\, 0\, ,
\end{equation}
where $\text{End}(V)\,=\, V\otimes V^*$ is the sheaf of holomorphic differential
operators from $V$ to $V$ of order zero and the above projection to $TN$ is given by the
symbol map. A holomorphic connection on $V$ is a holomorphic splitting of the short exact
sequence in \eqref{atex} \cite{At1}.

{}From \eqref{atex} we have the following short exact
sequence of holomorphic 
vector bundles on $F$:
\begin{equation}\label{l1}
0\, \longrightarrow\, \text{At}(\iota^* V)\, \stackrel{h_1}{\longrightarrow}\, \text{At}_F(V)
\,:=\, \iota^* \text{At}(V)\,
\stackrel{h_2}{\longrightarrow}\, N_F\, \longrightarrow\, 0\, ,
\end{equation}
where $\iota$ is the map in \eqref{l3} and $N_F$ is the holomorphic normal
bundle of the submanifold $F$.

\begin{lemma}\label{lem3}
Assume that the group $G$ in \eqref{eg} acts on the fibers of the vector bundle
$W\, :=\, \iota^* V$ through pointwise multiplication by a single character of $G$. Then
the short exact sequence in \eqref{l1} admits a canonical holomorphic splitting.
\end{lemma}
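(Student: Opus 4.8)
The plan is to exploit the fact that, because the group $G$ in \eqref{eg} fixes $F$ pointwise, the short exact sequence \eqref{l1} is a sequence of $G$--equivariant holomorphic bundles on $F$ whose $G$--actions are $\mathcal{O}_F$--linear: each $g\,\in\, G$ acts on $\text{At}(W)$, on $\text{At}_F(V)$ and on $N_F$ as a holomorphic automorphism covering the \emph{identity} map of $F$. The canonical splitting will then be furnished by the averaging (Reynolds) operator $\frac{1}{\# G}\sum_{g\in G}g$, once I show that the $G$--invariant subsheaf of the middle term $\text{At}_F(V)$ coincides with the image $h_1(\text{At}(W))$.

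First I would compute the $G$--action on each term of \eqref{l1}. By hypothesis $G$ acts on $W\,=\,\iota^*V$ through a single character $\chi$ of $G$, so it acts on $\text{End}(W)\,=\, W\otimes W^*$ through $\chi\otimes\chi^{-1}$, that is, trivially. Consequently $G$ acts trivially on $\text{At}(W)$ as well: for $D\,\in\,\text{At}(W)$ the induced action sends $D$ to $g_W\circ D\circ g_W^{-1}$, where $g_W\,=\,\chi(g)\,\text{Id}_W$ is a central scalar, so $g\cdot D\,=\, D$ (this is consistent with the Atiyah sequence $0\to\text{End}(W)\to\text{At}(W)\to TF\to 0$, whose outer terms carry the trivial action). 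Next I would record the standard behaviour of the $G$--action on $TN\big\vert_F$: since $F$ is a connected component of the fixed locus $N^G$, one has $T_xF\,=\,(T_xN)^G$ for each $x\,\in\, F$, namely the trivial isotypic component of $T_xN$, so that the normal space $N_{F,x}\,=\, T_xN/T_xF$ is the sum of the \emph{nontrivial} isotypic components. Hence $G$ acts trivially on $TF$ and $N_F^G\,=\, 0$.

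Now I would apply the functor of $G$--invariants to \eqref{l1}. Because $G$ is finite this functor is exact, so it yields $0\to\text{At}(W)^G\to\text{At}_F(V)^G\to N_F^G\to 0$. By the previous step $\text{At}(W)^G\,=\,\text{At}(W)$ while $N_F^G\,=\, 0$, whence $\text{At}_F(V)^G\,=\, h_1(\text{At}(W))$. Finally, the Reynolds operator $P\,=\,\frac{1}{\# G}\sum_{g\in G}g$ is an $\mathcal{O}_F$--linear idempotent endomorphism of $\text{At}_F(V)$ with image $\text{At}_F(V)^G\,=\, h_1(\text{At}(W))$ that restricts to the identity on this subbundle; equivalently, $h_2$ carries $\ker P$ isomorphically onto $N_F$. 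Either description provides the splitting, and it is canonical since $P$ is intrinsic to the equivariant structure.

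The main obstacle is the pair of isotypic statements that make the averaging operator land exactly on $h_1(\text{At}(W))$: that $W\otimes W^*$, and hence $\text{At}(W)$, carries the trivial $G$--action, which uses the single--character hypothesis in an essential way, and that $N_F$ has no nonzero $G$--invariants, which is the standard fact that the normal directions to a finite--group fixed locus span the nontrivial part of the isotropy representation. Once these are established, exactness of $G$--invariants for a finite group, together with the pointwise--fixed (hence $\mathcal{O}_F$--linear) nature of the actions that guarantees $P$ is a morphism of holomorphic bundles, makes the splitting immediate.
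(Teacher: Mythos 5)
Your proposal is correct and follows essentially the same route as the paper: both arguments show that the single-character hypothesis forces $G$ to act trivially on $\text{End}(W)$ and hence on $\text{At}(W)$, that $N_F$ has no nonzero $G$--invariants, conclude $\text{At}_F(V)^{G}\,=\,h_1(\text{At}(W))$, and then split \eqref{l1} by the canonical $G$--equivariant projection onto the invariant part. Your Reynolds operator $\frac{1}{\#G}\sum_{g\in G}g$ is exactly the projection $\text{At}_F(V)\,=\,\text{At}_F(V)^{G}\oplus Q\,\longrightarrow\,\text{At}_F(V)^{G}$ used in the paper, where $Q$ is the sum of the nontrivial isotypic components.
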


\begin{proof}
Since $G$ fixes $F$ pointwise, it acts on the fibers of the normal bundle $N_F$ in \eqref{l1}. All the homomorphisms in \eqref{l1}
are evidently $G$ equivariant. Let
$$
\text{At}_F(V)^{G}\,\subset\,\text{At}_F(V) 
$$
be the fixed point locus for the action of $G$ on $\text{At}_F(V)$. Since $G$ acts on the fibers of $W$ through pointwise
multiplication by a character of $G$, the action of $G$ on $\text{End}(W)\,=\, W\otimes W^*$ is the trivial one.
Recall that $\text{At}(W)$ fits in a short exact sequence
$$
0\, \longrightarrow\, \text{End}(W)\, \longrightarrow\, \text{At}(W)\,
\longrightarrow\, TF\, \longrightarrow\, 0
$$
where all the homomorphisms are $G$--equivariant (see \eqref{atex}). Since $G$ acts
trivially on both $\text{End}(W)$ and $TF$, we
conclude that $G$ acts trivially on $\text{At}(W)$. Consequently, we have
$$
h_1(\text{At}(W))\,\,\subset\,\, \text{At}_F(V)^{G},
$$
where $h_1$ is the homomorphism in \eqref{l1}.

On the other hand, the action of $G$ on any fiber of $N_F$ is nontrivial. From this it follows immediately that
\begin{equation}\label{l4}
h_1(\text{At}(W))\,=\, \text{At}_F(V)^{G}.
\end{equation}
Next, there is a natural holomorphic splitting of the $G$--equivariant bundle
$$
\text{At}_F(V)\,=\, \text{At}_F(V)^{G}\oplus Q\,.
$$
The fiber $Q_x$ over any $x\, \in\, F$ is the sum of all nontrivial irreducible $G$--submodules of the
$G$--module $\text{At}_F(V)_x$. Now from \eqref{l4} we conclude that the natural projection
\begin{equation}\label{l5}
\text{At}_F(V)\,=\, \text{At}_F(V)^{G}\oplus Q\,\longrightarrow\,\text{At}_F(V)^{G}\,=\,
h_1(\text{At}(W))
\end{equation}
gives a holomorphic splitting of \eqref{l1}.
\end{proof}

Consider the short exact sequence of holomorphic vector bundles
\begin{equation}\label{l6}
0\, \longrightarrow\, \iota^* \text{End}(V)\,=\, \text{End}(W)\, \longrightarrow\, \iota^*
\text{At}(V)\,=\, \text{At}_F(V)\, \longrightarrow\, \iota^* TN \, \longrightarrow\, 0
\end{equation}
obtained by restricting \eqref{atex} to $F$.

\begin{corollary}\label{cor2}
Assume that $G$ in \eqref{eg} acts on the fibers of the vector bundle
$W\, :=\, \iota^* V$ through pointwise multiplication by a single character of $G$
(just as in Lemma \ref{lem3}). Then a holomorphic connection on $W$ produces a holomorphic
splitting of the short exact sequence in \eqref{l6}.
\end{corollary}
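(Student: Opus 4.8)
The plan is to read off the desired splitting from the canonical splitting of \eqref{l1} supplied by Lemma \ref{lem3}, combined with the given connection on $W$. First I would reformulate all the data in terms of projections. By the description of the Atiyah sequence recalled around \eqref{atex}, a holomorphic connection on $W$ is exactly a holomorphic splitting of the Atiyah sequence $0\to \text{End}(W)\xrightarrow{\,j\,}\text{At}(W)\to TF\to 0$ on $F$; equivalently, it is a holomorphic projection $\pi_W\,:\,\text{At}(W)\to \text{End}(W)$ satisfying $\pi_W\circ j\,=\,\text{Id}_{\text{End}(W)}$. Dually, to split the sequence \eqref{l6} it suffices to produce a holomorphic projection $\Pi\,:\,\text{At}_F(V)\to \text{End}(W)$ that restricts to the identity on the subsheaf $\text{End}(W)\subset \text{At}_F(V)$ appearing there.

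Next I would invoke Lemma \ref{lem3} directly. Since its hypothesis on the $G$--action on $W\,=\,\iota^*V$ is assumed here, the sequence \eqref{l1} splits canonically, and the projection constructed in \eqref{l5} carries $\text{At}_F(V)$ onto $\text{At}_F(V)^{G}\,=\,h_1(\text{At}(W))$. As $h_1$ is injective, this yields a canonical holomorphic projection $P\,:\,\text{At}_F(V)\to \text{At}(W)$ with $P\circ h_1\,=\,\text{Id}_{\text{At}(W)}$. I would then simply set $\Pi\,:=\,\pi_W\circ P$, a composite of holomorphic maps, hence itself holomorphic.

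The verification that $\Pi$ splits \eqref{l6} rests on a single compatibility, which is the step I expect to need the most care: the inclusion $\text{End}(W)\hookrightarrow \text{At}_F(V)$ occurring in \eqref{l6} must coincide with the composite $h_1\circ j$. This is geometrically transparent, since both identify the order-zero operators $\iota^*\text{End}(V)$ inside $\iota^*\text{At}(V)\,=\,\text{At}_F(V)$, and $h_1$ is the identity on these order-zero parts; to make it rigorous I would assemble \eqref{l1} and \eqref{l6} into one commutative diagram together with the normal-bundle sequence $0\to TF\to \iota^*TN\to N_F\to 0$, checking that the symbol map of \eqref{l6} sends $h_1(\text{At}(W))$ onto $TF$ and that $h_2$ from \eqref{l1} factors as that symbol map followed by $\iota^*TN\to N_F$. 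Granting this, one computes $\Pi\circ(h_1\circ j)\,=\,\pi_W\circ(P\circ h_1)\circ j\,=\,\pi_W\circ j\,=\,\text{Id}_{\text{End}(W)}$, so $\Pi$ is a holomorphic splitting of \eqref{l6}. Since the substantive work has already been carried out in Lemma \ref{lem3}, I anticipate no genuine obstacle here beyond the bookkeeping of this diagram chase.
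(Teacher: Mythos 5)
Your proposal is correct and is essentially the paper's own argument: the paper likewise composes the canonical splitting of \eqref{l1} from Lemma \ref{lem3} with the connection on $W$, merely phrasing the resulting splitting of \eqref{l6} as the complementary subbundle $\rho^{-1}(D'(TF))$ rather than as your retraction $\Pi=\pi_W\circ P$ (whose kernel is exactly that subbundle). The compatibility you flag, namely that the inclusion $\mathrm{End}(W)\hookrightarrow \mathrm{At}_F(V)$ in \eqref{l6} is $h_1$ composed with the order-zero inclusion, is the same routine check the paper leaves as ``straightforward.''
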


\begin{proof}
Let $\rho\, :\, \text{At}_F(V)\, \longrightarrow\,\text{At}(W)$ be the homomorphism constructed
in \eqref{l5} that gives the canonical holomorphic splitting of \eqref{l1}. A holomorphic
connection $D$ on $W$ is given by a holomorphic homomorphism $$D'\,:\, TF\, \longrightarrow\,
\text{At}(W)$$ that splits the Atiyah exact sequence for $W$. Consider the holomorphic subbundle
$$
\rho^{-1}(D'(TF))\,\, \subset\,\, \text{At}_F(V)\, .
$$
It is straightforward to check that $\text{At}_F(V)\,=\, \text{End}(W)\oplus \rho^{-1}(D'(TF))$.
Consequently, $\rho^{-1}(D'(TF))$ produces a holomorphic
splitting of the short exact sequence in \eqref{l6}.
\end{proof}

\section{A canonical isomorphism of torsors and group action}

\subsection{An isomorphism of torsors}\label{se5.1}

Take a compact connected Riemann surface $Z$ of genus at least two.
For any integer $r\, \geq\, 1$, let $\mathcal{M}(r)$ denote the moduli space of stable vector
bundles on $Z$ of rank $r$ and degree zero. Let $\mathcal{MC}(r)$ denote the moduli space of 
pairs of the form $(E,\, D)$, where $E\, \in\, \mathcal{M}(r)$ and $D$ is a holomorphic
connection on $E$. Note that any stable vector bundle $V$ of degree zero admits a holomorphic
connection \cite{At1}, \cite{We}, in fact, $V$ admits a unique holomorphic connection whose
monodromy representation is unitary \cite{NS}. Let
\begin{equation}\label{e30}
\Phi\,\, :\,\, \mathcal{MC}(r)\,\, \longrightarrow\,\, \mathcal{M}(r)
\end{equation}
be the forgetful map that sends any $(E,\, D)\,\in\, \mathcal{MC}(r)$ to $E$.
The space of all holomorphic connections
on any holomorphic vector bundle $E\,\in\, \mathcal{M}(r)$ is an affine space for
$H^0(Z,\, \text{End}(E)\otimes K_Z)\,=\, T^*_E \mathcal{M}(r)$. Consequently,
using the projection $\Phi$ in \eqref{e30} the moduli space $\mathcal{MC}(r)$ is a
holomorphic torsor (affine bundle) over $\mathcal{M}(r)$ for the holomorphic
cotangent bundle $T^* \mathcal{M}(r)$.

Fix a theta characteristic ${\mathcal L}$ on $Z$. Consider
\begin{equation}\label{e31}
\widetilde{\Theta}\,\,:=\,\, \{E\, \in\, \mathcal{M}(r)\,\big\vert\,\, H^0(Z,\,
E\otimes{\mathcal L})\,\not=\, 0\}\, \subset\, \mathcal{M}(r)\, ;
\end{equation}
it is an irreducible divisor on $\mathcal{M}(r)$. The holomorphic line bundle
${\mathcal O}_{\mathcal{M}(r)}(\widetilde{\Theta})$ will be denoted by $\Theta$. Let
\begin{equation}\label{e32}
0\, \longrightarrow\,{\mathcal O}_{\mathcal{M}(r)}\, \stackrel{\mu}{\longrightarrow}\,
\text{At}(\Theta)\,\stackrel{\nu}{\longrightarrow}\,T\mathcal{M}(r)\,\longrightarrow\, 0
\end{equation}
be the Atiyah exact sequence for $\Theta$. Consider the projective bundle
\begin{equation}\label{eps}
\psi\, :\, {\mathbb P}(\text{At}(\Theta))\, \longrightarrow\, \mathcal{M}(r)
\end{equation}
that parametrizes all hyperplanes in the fibers of $\text{At}(\Theta)$. Let
\begin{equation}\label{e33}
{\mathcal U}\, \subset\, {\mathbb P}(\text{At}(\Theta))
\end{equation}
be the locus of all $(x,\, H)$, where $x\, \in\, \mathcal{M}(r)$ and $H\, \subset\,
\text{At}(\Theta)_x$ is a hyperplane, such that
\begin{equation}\label{e34}
\text{At}(\Theta)_x\,=\, H\oplus {\rm image}(\mu(x))\,=\, H\oplus{\mathbb C}\, ;
\end{equation}
the map $\mu$ is the one in \eqref{e32}.

Using \eqref{e32} it can be shown that ${\mathcal U}$ in \eqref{e33} is a
holomorphic torsor over $\mathcal{M}(r)$ for the holomorphic
cotangent bundle $T^* \mathcal{M}(r)$. To prove this, take any $x\, \in\,
\mathcal{M}(r)$ and $H\, \in\, {\mathcal U}_x$ (the fiber of ${\mathcal U}$ over $x$).
Then from the decomposition in \eqref{e34} it follows that for any $H'\,\in\, {\mathcal U}_x$,
there is a unique homomorphism
$$
\rho'\,\, :\,\, H \, \longrightarrow\, {\mathbb C}
$$
such that
$$
H'\,=\, \{(v,\, 2r\cdot\rho'(v))\,\,\big\vert\,\, v\, \in\, H\}\, \subset\,
H\oplus {\mathbb C}\,=\, \text{At}(\Theta)_x\, .
$$

On the other hand, the projection $\nu(x)$ in \eqref{e32} identifies $H$ with
$T_x\mathcal{M}(r)$. So $T^*_x\mathcal{M}(r)$ acts on ${\mathcal U}_x$
in \eqref{e33}; the action of
any $\omega\, \in\, T^*_x\mathcal{M}(r)$ sends any $H\, \in\, {\mathcal U}_x$
to
$$
H'\,=\, \{(v,\, 2r\cdot\omega(v))\,\,\big\vert\,\, v\, \in\, H\}\, \subset\,
H\oplus {\mathbb C}\,=\, \text{At}(\Theta)_x
$$
using the decomposition in \eqref{e34}; using the identification of $H$ with 
$T_x\mathcal{M}(r)$ given by $\nu(x)$, we consider $\omega$ as a homomorphism $H \, 
\longrightarrow\, {\mathbb C}$. Consequently, ${\mathcal U}$ in \eqref{e33} is
a holomorphic torsor over $\mathcal{M}(r)$ for $T^* \mathcal{M}(r)$.

Consider the Zariski open subset $\mathcal{M}'(r)\, :=\, \mathcal{M}(r)\setminus \widetilde{\Theta}$,
where $\widetilde{\Theta}$ is the divisor in \eqref{e31}. Let
$$\mathcal{MC}'(r)\, :=\,\Phi^{-1}(\mathcal{M}'(r))
\, \subset\, \mathcal{MC}(r)\ \ \text{ and }\ \ {\mathcal U}'\,:=\, {\mathcal U}\cap
\psi^{-1}(\mathcal{M}'(r))\, \subset\, {\mathcal U}
$$
be the restrictions of the $T^* \mathcal{M}(r)$--torsors $\mathcal{MC}(r)$ and
${\mathcal U}$ respectively (see \eqref{e30} and \eqref{e33}) to this open subset
$\mathcal{M}'(r)$; the map $\psi$ is the projection in \eqref{eps}.
We will show that these two $T^*\mathcal{M}'(r)$--torsors
$\mathcal{MC}'(r)$ and ${\mathcal U}'$ are canonically trivialized.

For any holomorphic vector bundle $V\, \in\, \mathcal{M}'(r)$ consider the holomorphic 
connection $D_V$ on $V$ constructed in \eqref{e16}. So we have a holomorphic section
of the torsor $\mathcal{MC}'(r)$
\begin{equation}\label{s1}
S_C\, :\, \mathcal{M}'(r)\, \longrightarrow\,\mathcal{MC}'(r)
\end{equation}
that sends any $V$ to $D_V$. This gives a holomorphic
trivialization of the $T^*\mathcal{M}'(r)$--torsor $\mathcal{MC}'(r)$. More precisely,
we have an isomorphism of $T^*\mathcal{M}'(r)$--torsors
\begin{equation}\label{m1}
T^*\mathcal{M}'(r)\,\,\stackrel{\sim}{\longrightarrow}\,\, \mathcal{MC}'(r)
\end{equation}
that sends any $\omega\, \in\, T^*_V\mathcal{M}'(r)$, $V\, \in\, \mathcal{M}'(r)$,
to $S_C(V)+\omega$.

To prove that the torsor ${\mathcal U}'$ is trivial, first note that the holomorphic line
bundle $\Theta\,=\, {\mathcal O}_{\mathcal{M}(r)}(\widetilde{\Theta})$ has a canonical
trivialization over $\mathcal{M}'(r)$ given by the constant function $1$. From this
trivialization of $\Theta\big\vert_{\mathcal{M}'(r)}$ we have a decomposition
$$
\text{At}(\Theta)\big\vert_{\mathcal{M}'(r)}\,=\, {\mathcal O}_{\mathcal{M}'(r)}\oplus
T{\mathcal{M}'(r)}\, .
$$
Therefore, the torsor ${\mathcal U}'$ has a holomorphic section
\begin{equation}\label{s2}
\eta\, :\, \mathcal{M}'(r)\,\longrightarrow\,{\mathcal U}'
\end{equation}
that sends any $V\, \in\, \mathcal{M}'(r)$ to the above hyperplane 
$$
T_V{\mathcal{M}'(r)}\, \subset\, \text{At}(\Theta)_V\, .
$$
Consequently, we obtain a holomorphic
trivialization of the $T^*\mathcal{M}'(r)$--torsor ${\mathcal U}'$. In other words,
we have an isomorphism of $T^*\mathcal{M}'(r)$--torsors
\begin{equation}\label{m2}
T^*\mathcal{M}'(r)\,\,\stackrel{\sim}{\longrightarrow}\,\, {\mathcal U}'
\end{equation}
that sends any $\omega\, \in\, T^*_V\mathcal{M}'(r)$, $V\, \in\, \mathcal{M}'(r)$,
to $\eta(V)+ \omega$, where $\eta$ is the section in \eqref{s2}.

Combining \eqref{m1} and \eqref{m2} we obtain an isomorphism of $T^*\mathcal{M}'(r)$--torsors
\begin{equation}\label{m3}
\mathcal{MC}'(r)\,\,\stackrel{\sim}{\longrightarrow}\,\, {\mathcal U}'
\end{equation}

The following result was proved in \cite{BH}.

\begin{theorem}[{\cite[p.~4, Corolary 1.2]{BH}}]\label{thm1}
The isomorphism of $T^*\mathcal{M}'(r)$--torsors in \eqref{m3} extends to a
holomorphic isomorphism of $T^*\mathcal{M}(r)$--torsors
$$
\Psi\,\, :\,\, \mathcal{MC}(r)\,\,\stackrel{\sim}{\longrightarrow}\,\, {\mathcal U}
$$
over entire $\mathcal{M}(r)$.
\end{theorem}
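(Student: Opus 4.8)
The plan is to show that the canonical isomorphism \eqref{m3}, which is defined only over the divisor complement $\mathcal M'(r)\,=\,\mathcal M(r)\setminus\widetilde{\Theta}$, is the restriction of a genuine isomorphism of $T^*\mathcal M(r)$--torsors over all of $\mathcal M(r)$. First I would record a uniqueness principle: a $T^*\mathcal M(r)$--equivariant morphism of torsors is completely determined by the image of a single local section, so any two equivariant extensions of \eqref{m3} agree on the dense open set $\mathcal M'(r)$ and hence everywhere. This reduces the whole statement to the \emph{existence} of an equivariant extension; once existence is known, the extension is automatically an isomorphism, since a $T^*\mathcal M(r)$--equivariant morphism of torsors is fibrewise an affine bijection.

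For existence I would produce a global morphism directly and then match it with \eqref{m3}. The conceptual source of such a morphism is the functoriality of connections under the determinant--of--cohomology construction defining $\Theta$: a holomorphic connection $D$ on $E\,\in\,\mathcal M(r)$ induces, through the Quillen/Knudsen--Mumford formalism applied to the family $E\otimes\mathcal L$, a holomorphic connection on the determinant line bundle underlying $\Theta$, that is, a holomorphic splitting of the Atiyah sequence \eqref{e32}. Such a splitting is exactly a point of $\mathcal U$ satisfying the transversality condition \eqref{e34}, and the normalising factor $2r$ appearing in \eqref{e34} is precisely the weight by which the induced connection scales under the cotangent action; this is what makes the assignment $(E,\,D)\,\longmapsto\,(\text{induced connection on }\Theta)$ into an algebraically defined, $T^*\mathcal M(r)$--equivariant morphism $\Psi\,:\,\mathcal{MC}(r)\,\longrightarrow\,\mathcal U$ over all of $\mathcal M(r)$.

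It then remains to check that $\Psi$ restricts to \eqref{m3} over $\mathcal M'(r)$ and, crucially, that it is regular across $\widetilde{\Theta}$. Over $\mathcal M'(r)$ the two torsors are trivialised by the section $S_C$ of \eqref{s1}, built from the canonical connection $D_V$ of \eqref{e16}, and by the section $\eta$ of \eqref{s2}, built from the trivialisation of $\Theta$ by the constant function $1$; one verifies that the connection on $\Theta$ induced by $D_V$ is exactly the one singled out by $\eta$, which is the content of \eqref{m3}. The main obstacle is the behaviour along $\widetilde{\Theta}$, where both trivialising data degenerate: $D_V$ is undefined precisely where $H^0(Z,\,V\otimes\mathcal L)\,\neq\,0$ (see \eqref{e31}), while the trivialising section $1$ of $\Theta$ vanishes there. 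I expect the heart of the argument to be a local computation near a generic point of $\widetilde{\Theta}$, using a local universal family and a local frame of $\Theta$, showing that these two degenerations cancel, so that the $1$--form measuring the difference of the two trivialisations has only a removable singularity and $\Psi$ extends holomorphically. Once this regularity is established, the uniqueness principle of the first paragraph identifies $\Psi$ as the unique $T^*\mathcal M(r)$--torsor isomorphism extending \eqref{m3}, completing the proof.
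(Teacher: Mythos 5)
First, a point of comparison: the paper does not prove this statement at all --- Theorem \ref{thm1} is imported verbatim from \cite{BH} (``The following result was proved in \cite{BH}''), so there is no in-paper argument to match yours against. Judged on its own terms, your proposal correctly identifies the overall shape of the argument used in \cite{BH} and \cite{BB} (trivialize both torsors over $\mathcal M'(r)$ by $S_C$ and $\eta$, then show the resulting identification extends across $\widetilde\Theta$), and your opening reduction --- an equivariant morphism of torsors is determined by its restriction to a dense open set and is automatically an isomorphism --- is sound. But the proof has a genuine gap at exactly the point where the theorem lives.

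The gap is twofold. First, your proposed global construction of $\Psi$ via ``functoriality of connections under the determinant-of-cohomology'' is not available off the shelf: the Knudsen--Mumford/Quillen formalism produces the line bundle $\Theta$ (and, from a \emph{metric} on $E$, a metric and hence a Chern connection on $\Theta$), but it does not assign to a single \emph{holomorphic} connection $D$ on $E$ a holomorphic splitting of $\mathrm{At}(\Theta)$ at the point $[E]\in\mathcal M(r)$. Producing such an assignment, with the correct normalization $2r$, is essentially the content of the theorem itself; invoking it as ``the conceptual source'' of the morphism begs the question. Second, and more importantly, the actual substance of the result --- that the two degenerations along $\widetilde\Theta$ cancel --- is stated in your last paragraph as ``I expect the heart of the argument to be a local computation \dots showing that these two degenerations cancel,'' but the computation is never carried out. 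Concretely, one must show that the section $S_C$ of \eqref{s1}, viewed through the trivialization \eqref{m1}, acquires along $\widetilde\Theta$ a pole whose polar part is exactly $\frac{1}{2r}$ times the logarithmic derivative of the canonical section of ${\mathcal O}_{\mathcal M(r)}(\widetilde\Theta)$, i.e.\ exactly the pole of $\eta$ in \eqref{s2} under \eqref{m2}. This requires an analysis of the Szeg\H{o}-kernel construction of $\Psi_V$ in \eqref{e14}--\eqref{e16} as $V$ approaches the locus $H^0(Z,V\otimes{\mathcal L})\neq 0$, which is the technical core of \cite{BH} and is absent here. Without it, what you have is a correct reduction of the theorem to its key lemma, not a proof.
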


A stable vector bundle of rank $r$ degree zero on $Z$ has a unique holomorphic connection whose
monodromy representation $\pi_1(Z,\, z_0) \, \longrightarrow\, \text{GL}(r,{\mathbb C})$ has
the property that its image is contained in ${\rm U}(r)$ \cite{NR}. Therefore, we get a
$C^\infty$ section
$$
\tau_U\, :\, \mathcal{M}(r)\,\longrightarrow \mathcal{MC}(r)
$$
of the torsor in \eqref{e30} by sending any stable vector bundle $V\, \in\, \mathcal{M}(r)$
to $(V,\, D_U)$, where $D_U$ is the unique holomorphic connection on $V$ with unitary
monodromy. It should be clarified that the map $\tau_U$ is \textit{not} holomorphic.

The holomorphic line bundle $\Theta$ on $\mathcal{M}(r)$ in \eqref{e32} has a natural Hermitian 
structure \cite{Qu}, which is known as the Quillen metric. The moduli space
$\mathcal{M}(r)$ has a natural K\"ahler structure \cite{AB}. The curvature of the Chern connection
on $\Theta$ corresponding to the Quillen metric is given by the K\"ahler form
on $\mathcal{M}(r)$ \cite{Qu}. This Chern connection
on $\Theta$ corresponding to the Quillen metric gives a $C^\infty$ section
$$
\tau_Q\, :\, \mathcal{M}(r)\,\longrightarrow \, \mathcal{U}
$$
of the $T^*\mathcal{M}(r)$--torsor $\mathcal{U}$ in \eqref{e33}. It
should be clarified that $\tau_Q$ is \textit{not} holomorphic.

The following theorem was proved in \cite{BH}.

\begin{theorem}[{\cite[p.~4, Theorem 1.1 and Corolary 1.2]{BH}}]\label{thm1p}
The isomorphism $\Psi$ is Theorem \ref{thm1} takes the section $\tau_U$ to the
section $\tau_Q$, in other words,
$$\Psi\circ\tau_U\,=\, \tau_Q\, .$$
\end{theorem}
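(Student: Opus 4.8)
The plan is to regard $\Psi\circ\tau_U$ and $\tau_Q$ as two $C^\infty$ sections of the holomorphic $T^*\mathcal{M}(r)$--torsor $\mathcal{U}$, so that their difference is a globally defined $C^\infty$ section $\beta$ of $T^*\mathcal{M}(r)\,=\,\Omega^1_{\mathcal{M}(r)}$; the assertion $\Psi\circ\tau_U\,=\,\tau_Q$ is equivalent to $\beta\,=\,0$. To analyse $\beta$ I would use the canonical ``$\bar\partial$ of a section'' of a holomorphic torsor: if $A\to M$ is a holomorphic torsor modelled on a holomorphic vector bundle $W$, then a $C^\infty$ section $s$ determines $\bar\partial s\,\in\,A^{0,1}(M,\,W)$ by writing $s\,=\,s_0+\phi$ with $s_0$ a local holomorphic section and $\phi$ a $C^\infty$ section of $W$, and setting $\bar\partial s\,:=\,\bar\partial\phi$; this is independent of the local $s_0$, is additive under the torsor action, and is carried to itself by any holomorphic torsor isomorphism whose linear part is the identity of $W$. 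Since $\Psi$ is such an isomorphism for $W\,=\,T^*\mathcal{M}(r)$, we get $\bar\partial(\Psi\circ\tau_U)\,=\,\bar\partial\tau_U$, computed inside $\mathcal{MC}(r)$. Hence $\bar\partial\beta\,=\,\bar\partial\tau_U-\bar\partial\tau_Q$, an identity of $(1,1)$--forms (because $A^{0,1}(M,\,\Omega^1_M)\,=\,A^{1,1}(M)$), and the problem reduces to proving $\bar\partial\tau_U\,=\,\bar\partial\tau_Q$.

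Next I would identify both of these $(1,1)$--forms with the natural K\"ahler form $\omega$ of $\mathcal{M}(r)$. The section $\tau_Q$ is the $(1,0)$--part of the Chern connection of the Quillen metric on $\Theta$, so $\bar\partial\tau_Q$ is exactly its $(1,1)$--curvature, which by Quillen's theorem \cite{Qu} is a fixed constant multiple of $\omega$; the normalising factor $2r$ built into the torsor structure of $\mathcal{U}$ in \eqref{e34} is arranged precisely so that this constant takes the prescribed value. The form $\bar\partial\tau_U$ measures the failure of the unitary connection $D_U$ to vary holomorphically with $V\,\in\,\mathcal{M}(r)$; identifying $T_V\mathcal{M}(r)$ with harmonic $\text{End}(V)$--valued $(0,1)$--forms and differentiating the harmonic (Narasimhan--Seshadri) connection, one computes this obstruction to be the same multiple of the Atiyah--Bott symplectic form \cite{AB}, which is $\omega$. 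Matching the two constants yields $\bar\partial\tau_U\,=\,\bar\partial\tau_Q$, hence $\bar\partial\beta\,=\,0$, i.e. $\beta$ is a holomorphic $1$--form on $\mathcal{M}(r)$.

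Finally I would kill $\beta$. It is holomorphic, and it is pinned down over the dense Zariski open subset $\mathcal{M}'(r)$ by the explicit trivializing sections $S_C$ and $\eta$ of \eqref{s1} and \eqref{s2}: since $\Psi\circ S_C\,=\,\eta$ there, one has $\beta\big\vert_{\mathcal{M}'(r)}\,=\,(\tau_U-S_C)-(\tau_Q-\eta)$, where $\tau_Q-\eta$ is the Chern connection $1$--form of the Quillen metric relative to the holomorphic frame $1$ of $\Theta$ (hence $\partial$ of the logarithm of the Quillen norm) and $\tau_U-S_C\,=\,D_U-D_V$ is the explicit difference of the unitary and the canonical connections. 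Combining this with the vanishing of nonzero holomorphic $1$--forms on the (trivial-determinant) moduli space to which the theta/Quillen data belongs forces $\beta\,=\,0$ on $\mathcal{M}'(r)$, and by continuity $\beta\,=\,0$ on all of $\mathcal{M}(r)$, giving $\Psi\circ\tau_U\,=\,\tau_Q$. The main obstacle is the second curvature computation: pinning down $\bar\partial\tau_U$ and matching its constant \emph{exactly} against the Quillen curvature, so that the two $(1,1)$--forms coincide on the nose and not merely up to scale. This is where the harmonic description of the variation of $D_U$ and the normalisation $2r$ of $\mathcal{U}$ must be reconciled; once the constants agree, the remaining steps are formal.
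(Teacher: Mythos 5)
First, a point of comparison: the paper does not prove Theorem \ref{thm1p} at all --- it is imported verbatim from \cite{BH} (``The following theorem was proved in \cite{BH}''), so there is no internal proof to measure your argument against. Your strategy --- regard $\beta\,:=\,(\Psi\circ\tau_U)-\tau_Q$ as a global $C^\infty$ $1$--form via the torsor structure, show $\bar\partial\beta\,=\,\bar\partial\tau_U-\bar\partial\tau_Q\,=\,0$ by matching both $(1,1)$--forms with the Atiyah--Bott/Quillen curvature, and then kill the resulting holomorphic $1$--form by rigidity --- is the standard template for results of this kind (it is essentially the argument of \cite{BFPT} in the Hodge-bundle setting), and the first two steps are sound, modulo the constant-matching you yourself flag.

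The genuine gap is in the last step. The moduli space here is $\mathcal{M}(r)$, stable bundles of rank $r$ and degree zero with \emph{varying} determinant. It fibers over $\operatorname{Pic}^0(Z)$ via the determinant map, and pulling back holomorphic $1$--forms from the Jacobian gives $H^0(\mathcal{M}(r),\,\Omega^1_{\mathcal{M}(r)})\,\neq\,0$ whenever $g\,\geq\,1$; in the extreme case $r\,=\,1$ the moduli space \emph{is} the Jacobian. So ``$\beta$ is holomorphic, hence zero'' fails: rigidity only kills the component of $\beta$ along the fixed-determinant fibers, leaving a $g$--dimensional ambiguity pulled back from $\operatorname{Pic}^0(Z)$. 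Your parenthetical appeal to the trivial-determinant moduli space does not apply, since the torsors and sections in Theorem \ref{thm1p} live on all of $\mathcal{M}(r)$. The fallback you offer --- the explicit identification of $\beta\big\vert_{\mathcal{M}'(r)}$ as $(\tau_U-S_C)-(\tau_Q-\eta)$ and the claim that this ``forces'' $\beta\,=\,0$ --- is precisely the content of \cite[Theorem 1.1]{BH}: the identity asserting that the difference between the unitary connection and the canonical Szeg\H{o}-kernel connection $D_V$ equals $\partial$ of the logarithm of the Quillen norm of the canonical trivializing section of $\Theta$. That analytic computation is the entire theorem; deferring it as ``the main obstacle'' and simultaneously relying on a rigidity statement that is false for $\mathcal{M}(r)$ leaves the proof without a working endgame. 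To repair it you would either have to carry out the Quillen-norm computation over $\mathcal{M}'(r)$ directly, or find an additional symmetry (e.g.\ compatibility of all the data with the $\operatorname{Pic}^0(Z)$--action, which is delicate because $\widetilde{\Theta}$ is not invariant under tensoring) that eliminates the Jacobian-direction forms.
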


\subsection{Action of automorphisms of Riemann surface}\label{se5.2}

Continuing with the notation of Section \ref{se5.1}, fix
a cyclic subgroup $\Gamma\, \subset\, \text{Aut}(Z)$. Note that $\Gamma$ is a finite subgroup because
the genus of $Z$ is at least two.

The action of $\Gamma$ on $Z$ produces a holomorphic action of $\Gamma$ on $\mathcal{M}(r)$; 
the action of any $\gamma\,\in\,\Gamma$ on $\mathcal{M}(r)$ sends any vector bundle $V\,\in\,
\mathcal{M}(r)$ to $(\gamma^{-1})^* V$. Similarly, $\Gamma$ acts on $\mathcal{MC}(r)$; 
the action of any $\gamma\,\in\,\Gamma$ on $\mathcal{MC}(r)$ sends any $(V,\, D_V)$ to
$((\gamma^{-1})^* V,\, (\gamma^{-1})^* D_V)$. The map $\Phi$ in \eqref{e30} is evidently
$\Gamma$--equivariant.

Now choose the theta characteristic $\mathcal L$ in Section \ref{se5.1} to be
$\Gamma$--invariant; from Lemma \ref{lem2} we know that such a theta characteristic exists.
For a holomorphic vector bundle $V$ on $Z$ and any $\gamma\, \in\, \Gamma$, we have
$$
H^i(Z,\, (\gamma^* V)\otimes {\mathcal L})\,=\, H^i(Z,\, (\gamma^* V)\otimes (\gamma^*
{\mathcal L}))\,=\,H^i(Z,\, \gamma^* (V\otimes {\mathcal L}))\,=\, H^i(Z,\,V\otimes{\mathcal L}).
$$
So, $V\, \in\, \widetilde{\Theta}$ (see \eqref{e31}) if and only if
$H^i(Z,\,V\otimes{\mathcal L})\,=\, 0$ if and only if $H^i(Z,\, (\gamma^* V)\otimes {\mathcal L})
\,=\, 0$ if and only if $\gamma^*V\, \in\, \widetilde{\Theta}$. In other words,
the divisor $\widetilde{\Theta}$ is preserved by the action of 
$\Gamma$ on $\mathcal{M}(r)$. Therefore, the holomorphic line bundle $\Theta\,=\, {\mathcal 
O}_{\mathcal{M}(r)}(\widetilde{\Theta})$ is $\Gamma$--equivariant. The actions of $\Gamma$ on 
$\mathcal{M}(r)$ and $\Theta$ together produce an action of $\Gamma$ on $\text{At}(\Theta)$.
This in turn produces an action of $\Gamma$ on $\mathcal U$ (constructed in \eqref{e33}).
The natural projection ${\mathcal U}\, \longrightarrow\, \mathcal{M}(r)$ is evidently
$\Gamma$--equivariant.

The section $S_C$ in \eqref{s1} is $\Gamma$--equivariant. Indeed, this follows immediately from 
the fact that the theta characteristic $\mathcal L$ is $\Gamma$--invariant. Consequently, the 
isomorphism in \eqref{m1} is $\Gamma$--equivariant; the action of $\Gamma$ on 
$T^*\mathcal{M}'(r)$ is given by the action of $\Gamma$ on $\mathcal{M}'(r)$.

Since the divisor $\widetilde{\Theta}$ in \eqref{e31} is preserved by the action of $\Gamma$ on 
$\mathcal{M}(r)$, the action of $\Gamma$ on $\mathcal U$ preserves ${\mathcal U}'\, \subset\, 
{\mathcal U}$. Moreover, the section $\eta$ in \eqref{s2} is $\Gamma$--equivariant. Therefore, 
the isomorphism in \eqref{m2} is $\Gamma$--equivariant.

Since the isomorphisms in \eqref{m1} and \eqref{m2} are both $\Gamma$--equivariant, the
isomorphism in \eqref{m3} is also $\Gamma$--equivariant. We put this
down in the form of the following lemma.

\begin{lemma}\label{lem4}
The isomorphism $\Psi$ in Theorem \ref{thm1} is $\Gamma$--equivariant.
\end{lemma}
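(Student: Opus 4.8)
The plan is to deduce the $\Gamma$--equivariance of $\Psi$ on all of $\mathcal{M}(r)$ from the $\Gamma$--equivariance of its restriction to the Zariski dense open subset $\mathcal{M}'(r)$ that was established just above, by means of analytic continuation. Recall from Theorem \ref{thm1} that $\Psi$ is the unique holomorphic extension of the isomorphism \eqref{m3} of $T^*\mathcal{M}'(r)$--torsors; in particular the restriction of $\Psi$ to $\mathcal{MC}'(r)\,:=\,\Phi^{-1}(\mathcal{M}'(r))$ coincides with \eqref{m3}. The discussion preceding the lemma shows that \eqref{m3} is $\Gamma$--equivariant, this being a consequence of the $\Gamma$--equivariance of the trivializing sections $S_C$ in \eqref{s1} and $\eta$ in \eqref{s2} (which in turn rest on the $\Gamma$--invariance of the theta characteristic $\mathcal{L}$, via Proposition \ref{prop1} in the case of $S_C$).

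For each $\gamma\,\in\,\Gamma$, write $\widehat{\gamma}$ for the holomorphic action of $\gamma$ on $\mathcal{MC}(r)$ and $\widetilde{\gamma}$ for its holomorphic action on $\mathcal{U}$, both lifting the action of $\gamma$ on $\mathcal{M}(r)$. I would then compare the two holomorphic maps $\Psi\circ\widehat{\gamma}$ and $\widetilde{\gamma}\circ\Psi$ from $\mathcal{MC}(r)$ to $\mathcal{U}$. Since $\gamma$ preserves the divisor $\widetilde{\Theta}$ in \eqref{e31}, it preserves $\mathcal{M}'(r)$, so both maps carry $\mathcal{MC}'(r)$ into $\mathcal{U}'$, and on this open subset they agree because there $\Psi$ equals the $\Gamma$--equivariant isomorphism \eqref{m3}.

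It then remains to upgrade this agreement on $\mathcal{MC}'(r)$ to agreement on all of $\mathcal{MC}(r)$. Since $\mathcal{M}(r)$ is smooth and irreducible and $\mathcal{M}'(r)\,=\,\mathcal{M}(r)\setminus\widetilde{\Theta}$ is a nonempty Zariski open subset, it is dense; and since $\Phi$ in \eqref{e30} exhibits $\mathcal{MC}(r)$ as a torsor for $T^*\mathcal{M}(r)$, hence as an affine bundle with irreducible fibers, the total space $\mathcal{MC}(r)$ is irreducible and $\mathcal{MC}'(r)\,=\,\Phi^{-1}(\mathcal{M}'(r))$ is a dense open subset of it. Two holomorphic maps into $\mathcal{U}$ that coincide on a dense open subset of the connected complex manifold $\mathcal{MC}(r)$ coincide everywhere, so $\Psi\circ\widehat{\gamma}\,=\,\widetilde{\gamma}\circ\Psi$ on all of $\mathcal{MC}(r)$. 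As $\gamma$ was arbitrary, this shows that $\Psi$ is $\Gamma$--equivariant.

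The only genuine point needing care, and hence the one I would single out as the main obstacle, is the density of $\mathcal{MC}'(r)$ in the connected space $\mathcal{MC}(r)$; once this is in hand the identity theorem finishes the argument, and all the substantive equivariance input has already been supplied by the $\Gamma$--equivariance of $S_C$ and $\eta$. In this sense the proof is essentially a continuation argument carrying the trivialized picture over $\mathcal{M}'(r)$ across the divisor $\widetilde{\Theta}$, with no further analytic difficulty.
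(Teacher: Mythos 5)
Your proof is correct and follows essentially the same route as the paper: the paper's argument is precisely the preceding discussion establishing that the sections $S_C$ and $\eta$, hence the isomorphisms \eqref{m1}, \eqref{m2} and \eqref{m3}, are $\Gamma$--equivariant, with the passage from $\mathcal{M}'(r)$ to all of $\mathcal{M}(r)$ left implicit. You merely make explicit the density/identity-theorem step that the paper takes for granted, which is a harmless (and welcome) elaboration.
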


\section{Isomorphism of torsors on moduli of parabolic bundles}

\subsection{Two torsors on moduli of parabolic bundles}

Fix $(X,\, S)$ as in Section \ref{se3.1}, and fix an integer $r\, \geq\, 1$. Fix parabolic data 
for rank $r$ bundles; this means choosing parabolic weights with multiplicities for
each point $x_i\, \in\, S$ such that the total number of parabolic weights at $x_i$ is $r$.
We assume that the total sum of parabolic weights over $S$ is an integer.

Let ${\mathcal N}_P(r)$ denote the moduli space of stable parabolic bundles of rank $r$ and 
parabolic degree zero, with the given parabolic structure \cite{MS}, \cite{MY}. Note that the 
above condition that the total sum of parabolic weights over $S$ is an integer is actually 
needed to ensure that ${\mathcal N}_P(r)$ is nonempty. We recall that ${\mathcal N}_P(r)$ is a 
smooth irreducible quasiprojective variety. If the parabolic data is such that any semistable 
parabolic bundle is stable, then ${\mathcal N}_P(r)$ is in fact a projective variety. For
any parabolic bundle $E_*\, \in\, {\mathcal N}_P(r)$, we have
\begin{equation}\label{e35}
T_{E_*}{\mathcal N}_P(r)\,=\, H^1(X,\, \text{End}_P(E_*))\ \ \, \text{ and }\ \ \,
T^*_{E_*}{\mathcal N}_P(r)\,=\, H^0(X,\, \text{End}_N(E_*)\otimes K_X)
\end{equation}
\cite{MS}, \cite{Yo}; see \eqref{eb} and \eqref{eb0} for $\text{End}_N(E_*)$ and $\text{End}_P(E_*)$
respectively.

We will construct two holomorphic $T^*{\mathcal N}_P(r)$--torsors over ${\mathcal N}_P(r)$.

Let ${\mathcal NC}_P(r)$ denote the moduli space of pairs of the form $(E_*,\,D)$, where $E_*\, \in\, {\mathcal N}_P(r)$
is a stable parabolic bundle and $D$ is a connection on $E_*$ (see Definition \ref{dlc}). Let
\begin{equation}\label{e36}
P\, :\, {\mathcal NC}_P(r)\, \longrightarrow\, {\mathcal N}_P(r)
\end{equation}
be the forgetful map that sends any $(E_*,\, D)$ to $E_*$. From Lemma \ref{los} and \eqref{e35} it follows that
$P$ in \eqref{e36} makes ${\mathcal NC}_P(r)$ a holomorphic $T^*{\mathcal N}_P(r)$--torsor over ${\mathcal N}_P(r)$.

On ${\mathcal N}_P(r)$ there are natural ample line bundles constructed using the determinant of 
cohomology combined with the quasiparabolic filtrations over the parabolic points and the 
parabolic weights; see \cite{BRag}, \cite{NR}, \cite{LS}, \cite{Su} for the details. Since 
their construction is not very relevant here, we refrain from recalling it. These line bundles 
can be described using the identification between the parabolic bundles and the equivariant 
bundles mentioned in Section \ref{se3.3}; we will briefly recall this description of the line 
bundles.

As briefly described in Section \ref{se3.3}, given parabolic data, there is a is a ramified 
Galois covering
\begin{equation}\label{vz}
\varphi\, :\, Z\, \longrightarrow\, X
\end{equation}
such that the parabolic bundles of given parabolic type correspond to equivariant bundles on 
$Z$. It was also noted in Section \ref{se3.3} that the pair $(Z,\, \varphi)$ can be so chosen 
that $\Gamma\, :=\, \text{Aut}(Z/X)$ is cyclic. The moduli space ${\mathcal N}_P(r)$ is 
identified with a component of the fixed point locus $\mathcal{M}(r)^\Gamma$ (see \eqref{e30}). 
Then the restriction of the line bundle $\Theta$ (see \eqref{e32} and \eqref{e31}) to
\begin{equation}\label{es}
{\mathcal N}_P(r)\, \subset\, \mathcal{M}(r)^\Gamma\, \subset\, \mathcal{M}(r)
\end{equation}
is a power of the determinant bundle (see \cite{BRag}). This restriction of $\Theta$ to
${\mathcal N}_P(r)$ will be denoted by $\Theta_P$.

Consider the Atiyah exact sequence for $\Theta_P$
\begin{equation}\label{e37}
0\, \longrightarrow\,{\mathcal O}_{{\mathcal N}_P(r)}\, \stackrel{p}{\longrightarrow}\,
\text{At}(\Theta_P)\,\stackrel{q}{\longrightarrow}\,T{\mathcal N}_P(r)\,\longrightarrow\, 0.
\end{equation}
and let
$$
\xi\, :\, {\mathbb P}(\text{At}(\Theta_P))\, \longrightarrow\,{\mathcal N}_P(r)
$$
be the projective bundle that parametrizes all hyperplanes in the fibers of
$\text{At}(\Theta_P)$. Set
\begin{equation}\label{e38}
{\mathcal U}_P\, \subset\, {\mathbb P}(\text{At}(\Theta_P))
\end{equation}
to be the locus of all $(x,\, H)$, where $x\, \in\, {\mathcal N}_P(r)$ and $H\, \subset\,
\text{At}(\Theta_P)_x$ is a hyperplane, such that
\begin{equation}\label{e39}
\text{At}(\Theta_P)_x\,=\, H\oplus {\rm image}(p(x))\,=\, H\oplus{\mathbb C}\, ;
\end{equation}
the map $p$ is the one in \eqref{e37}. Let
\begin{equation}\label{e40}
Q\, :\, {\mathcal U}_P\, \longrightarrow\, {\mathcal N}_P(r)
\end{equation}
be the natural projection. Take any $x\, \in\,{\mathcal N}_P(r)$ and $H\, \in\, Q^{-1}(x)$.
Then from the decomposition in \eqref{e39} it follows
that for any $H'\, \in\, Q^{-1}(x)$, there is a unique homomorphism
$$
\rho'\,\, :\,\, H \, \longrightarrow\, {\mathbb C}
$$
such that
$$
H'\,=\, \{(v,\, 2r\cdot\rho'(v))\,\,\big\vert\,\, v\, \in\, H\}\, \subset\,
H\oplus {\mathbb C}\,=\, \text{At}(\Theta_P)_x\, .
$$
This shows that ${\mathcal U}_P$ in \eqref{e38} is a holomorphic $T^*{\mathcal N}_P(r)$--torsor 
over ${\mathcal N}_P(r)$. For any $x\, \in\, {\mathcal N}_P(r)$, the action of $\omega\, \in\, 
T^*_x{\mathcal N}_P(r)$ on $Q^{-1}(x)$ sends any hyperplane $H\, \in\, Q^{-1}(x)$ to
$$
H'\,=\, \{(v,\, 2r\cdot\omega(v))\,\,\big\vert\,\, v\, \in\, H\}\, \subset\,
H\oplus {\mathbb C}\,=\, \text{At}(\Theta_P)_x
$$
using the decomposition in \eqref{e39}; using the identification of $H$ with
$T_x\mathcal{M}(r)$ given by $q(x)$ (see \eqref{e37}) we consider $\omega$
as a homomorphism $H \,\longrightarrow\, {\mathbb C}$. 

\subsection{Holomorphic isomorphism of torsors}

\begin{theorem}\label{thm2}
There is a natural holomorphic isomorphism
$$\Psi_P\, :\, {\mathcal NC}_P(r)\, \longrightarrow\, {\mathcal U}_P$$
between the two holomorphic $T^*{\mathcal N}_P(r)$--torsors ${\mathcal NC}_P(r)$ and ${\mathcal
U}_P$ (see \eqref{e36} and \eqref{e40}).
\end{theorem}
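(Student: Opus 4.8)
The plan is to obtain $\Psi_P$ by restricting the $\Gamma$-equivariant isomorphism $\Psi$ of Theorem \ref{thm1} to the fixed-point locus. Recall from \eqref{es} that $\mathcal{N}_P(r)$ is a connected component of $\mathcal{M}(r)^\Gamma$, with inclusion $\iota\,:\,\mathcal{N}_P(r)\hookrightarrow\mathcal{M}(r)$ and $\Theta_P=\iota^*\Theta$; thus $\mathcal{N}_P(r)$ plays the role of the fixed-point component $F$ of Lemma \ref{lem3}. By Lemma \ref{lem4} the isomorphism $\Psi\,:\,\mathcal{MC}(r)\to\mathcal{U}$ is $\Gamma$-equivariant, and since it covers the identity of $\mathcal{M}(r)$ it carries the $\Gamma$-fixed locus of $\mathcal{MC}(r)$ lying over $\mathcal{N}_P(r)$ isomorphically onto the $\Gamma$-fixed locus of $\mathcal{U}$ lying over $\mathcal{N}_P(r)$. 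The whole proof then reduces to identifying these two fixed loci with the parabolic torsors $\mathcal{NC}_P(r)$ and $\mathcal{U}_P$, compatibly with the $T^*\mathcal{N}_P(r)$-actions.

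For the connection side I would use Lemma \ref{ic}. For $E_*\in\mathcal{N}_P(r)$ let $V$ be the corresponding equivariant bundle on $Z$, viewed as a point of $\mathcal{N}_P(r)\subset\mathcal{M}(r)$. The fiber of $\mathcal{NC}_P(r)$ over $E_*$ is the affine space of parabolic connections on $E_*$, which by Lemma \ref{ic} is in natural bijection with the equivariant connections on $V$, and these are exactly the $\Gamma$-fixed points of the fiber $\Phi^{-1}(V)$ of $\mathcal{MC}(r)$. This bijection is affine over the identification $H^0(Z,\,\text{End}(V)\otimes K_Z)^\Gamma=T^*_{E_*}\mathcal{N}_P(r)$ provided by \eqref{e35} and the dictionary of Section \ref{se3.3}, so it is an isomorphism of $T^*\mathcal{N}_P(r)$-torsors
$$\mathcal{NC}_P(r)\,\stackrel{\sim}{\longrightarrow}\,\bigl(\mathcal{MC}(r)\big\vert_{\mathcal{N}_P(r)}\bigr)^\Gamma\, .$$

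For the theta side, which I expect to be the main obstacle, I would invoke Lemma \ref{lem3} and Corollary \ref{cor2}. Since $\mathcal{N}_P(r)$ is connected, $\Gamma$ acts on the fibers of $\Theta_P=\iota^*\Theta$ through a single character, so Lemma \ref{lem3} applies with $V=\Theta$ and $W=\Theta_P$; its output \eqref{l4} identifies $\text{At}(\Theta_P)$ with the invariant part $(\iota^*\text{At}(\Theta))^\Gamma$ and provides a $\Gamma$-equivariant splitting $\iota^*\text{At}(\Theta)=\text{At}(\Theta_P)\oplus Q$ in which $Q$ carries only nontrivial characters of $\Gamma$. A point of $\mathcal{U}$ over $V\in\mathcal{N}_P(r)$ is a $\Gamma$-invariant hyperplane $H\subset\text{At}(\Theta)_V$ complementary to $\text{image}(\mu(V))$; taking invariants sends it to $H\cap\text{At}(\Theta_P)_{E_*}$, a hyperplane complementary to $\text{image}(p(E_*))$ and hence a point of $\mathcal{U}_P$. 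I would verify that this is a bijection, with inverse given by adding back the summand $Q$, and that it intertwines the two $T^*\mathcal{N}_P(r)$-actions, the point being that $\mu$ restricts to $p$ under \eqref{l4} and that the conventions \eqref{e34} and \eqref{e39} use the same scaling factor $2r$. This yields an isomorphism of torsors
$$\bigl(\mathcal{U}\big\vert_{\mathcal{N}_P(r)}\bigr)^\Gamma\,\stackrel{\sim}{\longrightarrow}\,\mathcal{U}_P\, .$$

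Composing the three displayed isomorphisms produces $\Psi_P$; it is holomorphic, being the restriction of the holomorphic $\Psi$ followed by the two natural holomorphic identifications, and it is an isomorphism of $T^*\mathcal{N}_P(r)$-torsors since each factor is. The main difficulty lies in the theta side: in checking that the invariant-part construction, built on the splitting of Lemma \ref{lem3}, matches the $\Gamma$-invariant connections on $\Theta$ with the connections on $\Theta_P$ and respects the torsor structures. As a concrete check I would restrict everything to the open set $U_0:=\mathcal{N}_P(r)\cap\mathcal{M}'(r)$: there the $\Gamma$-invariant sections $S_C$ and $\eta$ of \eqref{s1} and \eqref{s2} (which are $\Gamma$-equivariant by Proposition \ref{prop1} and Section \ref{se5.2}) trivialize $\mathcal{NC}_P(r)$ and $\mathcal{U}_P$, and under these trivializations $\Psi_P$ is the identity because $\Psi$ is; the identification over $U_0$ then extends over all of $\mathcal{N}_P(r)$ precisely because $\Psi$ does.
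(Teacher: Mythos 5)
Your proposal is correct and follows essentially the same route as the paper: the paper likewise obtains $\Psi_P$ by restricting the $\Gamma$-equivariant isomorphism $\Psi$ of Theorem \ref{thm1} (via Lemma \ref{lem4}) to the fibers over ${\mathcal N}_P(r)\subset\mathcal{M}(r)^\Gamma$, identifying ${\mathcal NC}_P(r)$ with $\Phi^{-1}({\mathcal N}_P(r))^\Gamma$ by Lemma \ref{ic} and ${\mathcal U}_P$ with $\widehat{\mathcal U}^\Gamma$. The only difference is one of exposition: the paper asserts the identification ${\mathcal U}_P=\widehat{\mathcal U}^\Gamma$ as clear, whereas you spell out the underlying mechanism via the splitting of Lemma \ref{lem3}, which is exactly the intended justification.
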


\begin{proof}
As mentioned before, ${\mathcal N}_P(r)$ is a connected component of $\mathcal{M}(r)^\Gamma$
(see \eqref{es}). Consider the projection $\Phi$ in \eqref{e30}. Since it is
$\Gamma$--equivariant, the action of $\Gamma$ on $\mathcal{MC}(r)$
preserves $\Phi^{-1}({\mathcal N}_P(r))$. It is now straightforward to check
that 
\begin{equation}\label{ec4}
{\mathcal NC}_P(r)\,=\,\Phi^{-1}({\mathcal N}_P(r))^\Gamma\, \subset\, \Phi^{-1}({\mathcal N}_P(r)),
\end{equation}
where ${\mathcal NC}_P(r)$ is the moduli space in \eqref{e36}. Indeed, this is a consequence
of Lemma \ref{ic}.

In Section \ref{se5.2} we saw that the actions of $\Gamma$ on 
$\mathcal{M}(r)$ and $\Theta$ together produce an action of $\Gamma$ on
$\mathcal U$ (constructed in \eqref{e33}).
Since the natural projection
\begin{equation}\label{ec3}
{\mathcal U}\, \longrightarrow\, \mathcal{M}(r)
\end{equation}
is
$\Gamma$--equivariant, the action of $\Gamma$ on ${\mathcal U}$ preserves the
inverse image of ${\mathcal N}_P(r)\, \subset\, \mathcal{M}(r)$ for the projection
in \eqref{ec3}; this inverse image will be denoted by $\widehat{\mathcal U}$.
Clearly, we have
\begin{equation}\label{ec5}
{\mathcal U}_P \,=\, \widehat{\mathcal U}^\Gamma\,\subset\, \widehat{\mathcal U}\, .
\end{equation}

Now the theorem follows from Lemma \ref{lem4} and the isomorphisms in \eqref{ec4} and
\eqref{ec5}.
\end{proof}

Any stable parabolic bundle $E_*\, \in\, {\mathcal N}_P(r)$ of parabolic degree zero has a 
unique connection whose monodromy homomorphism $\pi_1(X\setminus S,\, x_0)\, \longrightarrow\, 
\text{GL}(r, {\mathbb C})$ has its image contained in $\text{U}(r)$ \cite{MS}. Therefore,
we obtain a $C^\infty$ section of the projection $P$ in \eqref{e36}
\begin{equation}\label{ta}
\tau_{UP}\,\,:\, \, {\mathcal N}_P(r)\, \longrightarrow\, {\mathcal NC}_P(r)
\end{equation}
that sends any $E_*\, \in\, {\mathcal N}_P(r)$ to $(E_*,\, D_E)$, where $D_E$ is the unique
connection on $E_*$ whose monodromy homomorphism has its image contained in $\text{U}(r)$.

Consider the Quillen connection on the holomorphic line bundle $\Theta_P$ \cite{Qu},
\cite{BRag}. It produces a $C^\infty$ section
$$
\tau_{QP}\,\,:\, \, {\mathcal N}_P(r)\, \longrightarrow\, {\mathcal U}_P
$$
of the holomorphic $T^*{\mathcal N}_P(r)$--torsor ${\mathcal U}_P$ in \eqref{e38}.

Theorem \ref{thm1p} has the following immediate consequence:

\begin{corollary}\label{cor3}
The isomorphism $\Psi_P$ in Theorem \ref{thm2} takes the section $\tau_{UP}$ to
the section $\tau_{QP}$, or in other words, $$\Psi_P\circ\tau_{UP}\,=\, \tau_{QP}\, .$$
\end{corollary}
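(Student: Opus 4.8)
Looking at Corollary \ref{cor3}, I need to prove that the isomorphism $\Psi_P$ interchanges the two $C^\infty$ sections $\tau_{UP}$ and $\tau_{QP}$. The key strategic observation is that this is stated as an \emph{immediate consequence} of Theorem \ref{thm1p}, which already establishes the analogous fact $\Psi\circ\tau_U\,=\,\tau_Q$ upstairs on $\mathcal{M}(r)$. So the entire proof hinges on showing that the parabolic objects are precisely the $\Gamma$--invariant restrictions of the corresponding objects on $\mathcal{M}(r)$, and that the two $C^\infty$ sections respect this restriction. The plan is therefore to express everything downstairs as the restriction to the fixed-point component ${\mathcal N}_P(r)\,\subset\,\mathcal{M}(r)^\Gamma$ of the corresponding structure upstairs, and then invoke Theorem \ref{thm1p} together with the equivariance already recorded in Lemma \ref{lem4}.

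\begin{proof}
Recall from \eqref{es} that ${\mathcal N}_P(r)$ is a connected component of the fixed-point locus $\mathcal{M}(r)^\Gamma$, and from \eqref{ec4} and \eqref{ec5} that the parabolic torsors are the $\Gamma$--invariant parts
$$
{\mathcal NC}_P(r)\,=\,\Phi^{-1}({\mathcal N}_P(r))^\Gamma\ \ \text{ and }\ \ {\mathcal U}_P\,=\,\widehat{\mathcal U}^\Gamma.
$$
By Theorem \ref{thm2}, the isomorphism $\Psi_P$ is precisely the restriction of the $\Gamma$--equivariant isomorphism $\Psi$ (from Theorem \ref{thm1}, shown $\Gamma$--equivariant in Lemma \ref{lem4}) to these fixed-point loci. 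Thus it suffices to check that the two $C^\infty$ sections $\tau_{UP}$ and $\tau_{QP}$ are the restrictions of $\tau_U$ and $\tau_Q$ respectively, for then applying $\Psi$ and using Theorem \ref{thm1p} gives the claim directly.

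First consider the unitary section. For a parabolic bundle $E_*\,\in\,{\mathcal N}_P(r)$ corresponding to an equivariant bundle $F$ on $Z$, the unique connection on $E_*$ with monodromy in $\text{U}(r)$ corresponds, under the bijection of Lemma \ref{ic}, to the unique equivariant connection on $F$ with unitary monodromy. But the underlying vector bundle $V\,=\,F\,\in\,\mathcal{M}(r)$ has a unique holomorphic connection with unitary monodromy, namely the one defining $\tau_U(V)$, and this connection is automatically $\Gamma$--invariant by its uniqueness. Hence $\tau_{UP}(E_*)$ is exactly the $\Gamma$--invariant point $\tau_U(V)$, which is to say $\tau_{UP}$ is the restriction of $\tau_U$ to ${\mathcal N}_P(r)$.

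Next consider the Quillen section. Since $\Theta_P$ is the restriction of $\Theta$ to ${\mathcal N}_P(r)\,\subset\,\mathcal{M}(r)$ as in \eqref{es}, the Quillen metric on $\Theta_P$ is the restriction of the Quillen metric on $\Theta$, and therefore its Chern connection is the restriction of the Chern connection on $\Theta$. Because the $\Gamma$--invariant theta characteristic $\mathcal L$ was chosen (so that $\Theta$ is $\Gamma$--equivariant and the Quillen metric is $\Gamma$--invariant), the corresponding splitting of the Atiyah sequence is $\Gamma$--equivariant, and hence lands in $\widehat{\mathcal U}^\Gamma\,=\,{\mathcal U}_P$. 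This exhibits $\tau_{QP}$ as the restriction of $\tau_Q$ to ${\mathcal N}_P(r)$.

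Combining these identifications with Theorem \ref{thm1p}, for any $E_*\,\in\,{\mathcal N}_P(r)$ we have
$$
\Psi_P(\tau_{UP}(E_*))\,=\,\Psi(\tau_U(E_*))\,=\,\tau_Q(E_*)\,=\,\tau_{QP}(E_*),
$$
where the outer equalities use that $\Psi_P$, $\tau_{UP}$, $\tau_{QP}$ are the restrictions of $\Psi$, $\tau_U$, $\tau_Q$. Therefore $\Psi_P\circ\tau_{UP}\,=\,\tau_{QP}$, as claimed.
\end{proof}

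The step I expect to require the most care is verifying that the two $C^\infty$ sections genuinely restrict correctly; in particular, the identification $\tau_{QP}\,=\,\tau_Q\big\vert_{{\mathcal N}_P(r)}$ relies on the compatibility of the Quillen metric with restriction to the fixed-point component, which is why the $\Gamma$--invariance of the theta characteristic (Lemma \ref{lem2}) is essential. The unitary side is more transparent because uniqueness of the unitary connection forces $\Gamma$--invariance automatically.
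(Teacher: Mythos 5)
Your proof is correct and follows exactly the route the paper intends: the paper offers no written argument beyond calling the corollary an ``immediate consequence'' of Theorem \ref{thm1p}, and the content you supply --- that $\Psi_P$, $\tau_{UP}$, $\tau_{QP}$ are the restrictions of $\Psi$, $\tau_U$, $\tau_Q$ to the fixed-point component via \eqref{ec4}, \eqref{ec5}, Lemma \ref{lem4}, the uniqueness of the unitary connection, and the $\Gamma$--invariance of the Quillen metric --- is precisely the justification implicit in the paper's setup. No gaps; this is the same argument, spelled out.
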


\begin{remark}
The isomorphism $\Psi_P$ in Theorem \ref{thm2} is actually independent of the choice of the covering $\varphi$ in
\eqref{vz}. In fact, the two sections the sections $\tau_{UP}$ and $\tau_{QP}$ in Corollary \ref{cor3} are independent of
the choice of $\varphi$. Therefore, from Corollary \ref{cor3} it follows that the isomorphism $\Psi_P$ in Theorem \ref{thm2} is
independent of the choice of the covering $\varphi$.
\end{remark}

\begin{remark}
As mentioned in the introduction, neither of the two sections $\tau_{UP}$ and $\tau_{QP}$ can be constructed staying within the
realm of complex analytic geometry. But Corollary \ref{cor3} says that one determines the other through algebraic geometry. In this
sense, these two sections are deeply interconnected. See \cite{BFPT} for similar relationship between two different
$C^\infty$ sections. In \cite{BFPT}, one section is given by the Quillen connection on the Hodge bundle over the moduli space
${\mathcal M}_g$ of smooth complex projective curves of genus $g\, \geq\, 2$. The other section is given by the uniformization of
Riemann surfaces.
\end{remark}

\begin{remark}
Let us consider the set-up of Theorem \ref{thm2} without the assumption that all the parabolic weights are rational numbers.
The Mehta--Seshadri correspondence between parabolic bundles and unitary representations, which was proved in \cite{MS}
under the assumption that the parabolic weights are rational numbers, is known to extend to the situation where the
parabolic weights are real numbers. This was proved by Biquard \cite{Biq}; a vastly general result was proved in \cite{Si}.
Consequently, the $T^*{\mathcal N}_P(r)$--torsor ${\mathcal NC}_P(r)$ over ${\mathcal N}_P(r)$, along with its
$C^\infty$ section $\tau_{UP}$ (see \eqref{ta}), are there when the parabolic weights are real numbers. 
It is not difficult to see that the $T^*{\mathcal N}_P(r)$--torsor ${\mathcal U}_P$ also exists over ${\mathcal N}_P(r)$.
To explain this, if $L$ is a holomorphic line bundle on a complex manifold $Z$, then $L^{\otimes \lambda}$ makes sense only
when $\lambda$ is an integer. But sheaf of connections of $L^{\otimes\lambda}$ makes sense even if $\lambda$ is a complex number.
Indeed, if
$$
0\, \longrightarrow\, {\mathcal O}_Z \, \longrightarrow\, \text{At}(L) \, \longrightarrow\, TZ \, \longrightarrow\, 0
$$
is the Atiyah exact sequence for $L$, then the Atiyah exact sequence for $L^{\otimes \lambda}$ is the exact sequence obtained
from the above sequence using the endomorphism of ${\mathcal O}_M$ given by the multiplication with $\lambda$. Therefore, it
makes sense to ask whether Theorem \ref{thm2} remains valid in the more general set-up where the parabolic weights are allowed to 
be irrational numbers. The construction of the isomorphism $\Psi_P$ in Theorem \ref{thm2} does not shed any light on how it
changes when the parabolic weights change. For this reason any attempt to answer the question lies outside the scope of
this paper.
\end{remark}

\section*{Acknowledgements}

The author thanks the referee for helpful comments. This work was partly supported by a
J. C. Bose Fellowship.


\end{document}